\documentclass[a4paper,12pt]{article}

\usepackage[top=2.5cm,bottom=2.5cm,left=2.5cm,right=2.5cm]{geometry}

\usepackage{amsmath, amssymb, amsthm}
\usepackage{tikz}
\usetikzlibrary{arrows.meta} 

\usepackage{mathtools}
\usepackage{latexsym}

\usepackage{cite}
\usepackage{url}

\usepackage{enumerate}
\usepackage{appendix}

\numberwithin{equation}{section}

\newtheoremstyle{nopunct}%
  {}%
  {}%
  {\itshape}%
  {}%
  {\bfseries}%
  {.}%
  {.5em}%
  {}%

\theoremstyle{nopunct}
\newtheorem{theorem}{Theorem}[section]
\newtheorem{proposition}[theorem]{Proposition}

\newtheorem{conjecture}[theorem]{Conjecture}
\newtheorem{claim}{Claim}
\theoremstyle{remark}

\usepackage{indentfirst}
\usepackage[section]{placeins}
\allowdisplaybreaks

\usepackage{cleveref}
\crefname{theorem}{Theorem}{Theorems}
\crefname{proposition}{Proposition}{Propositions}
\crefname{lemma}{Lemma}{Lemmas}
\crefname{corollary}{Corollary}{Corollaries}
\crefname{conjecture}{Conjecture}{Conjectures}
\crefname{claim}{Claim}{Claims}
\crefname{remark}{Remark}{Remarks}
\crefname{problem}{Problem}{Problems}
\crefname{example}{Example}{Examples}
\crefname{definition}{Definition}{Definitions}

\begin{document}
\vspace*{10mm}

\begin{center}
\section*{Saturation numbers of $K_3 \vee P_k$}
\vspace{2mm}
 Huaqin Li, Lihua You*, Xiaoxue Zhang\\
\vspace{2mm}
School of Mathematical Sciences\\
South China Normal University, Guangzhou, 510631, P. R. China\\
{\tt 2024022055@m.scnu.edu.cn}, {\tt ylhua@scnu.edu.cn}, {\tt zhang\_xx1209@163.com}
\end{center}
\vspace{7mm}

\noindent \textbf{Abstract:}\quad 
A graph $G$ is said to be $H$-saturated if $G$ contains no copy of $H$, but $G+e$ contains a copy of $H$ for any edge $e$ of $\overline{G}$. The saturation number of $H$, denoted by $\mathrm{sat}(n, H)$, is the minimum number of edges among all $H$-saturated graphs of order $n$. In this paper, we investigate the saturation number of $K_3 \vee P_k$ for $k \ge 3$ and show that $\mathrm{sat}(n, K_3 \vee P_k) = 3n - 6 + \mathrm{sat}(n-3, P_k)$ holds for all $n \ge a_k + 3$ and $k \ge 3$, which verifies the conjecture [Discrete Appl. Math, 394(2026) 62-80] on $\mathrm{sat}(n, K_s \vee P_k)$ is true for the case $s=3$, where $a_k = k$ for $3 \le k \le 5$, $a_k = 3 \cdot 2^{t-1} - 2$ for $k = 2t \ge 6$, and $a_k = 2^{t+1} - 2$ for $k = 2t + 1 \ge 7$. \\

\noindent \textbf{Keywords:} Saturation number; Saturation graph; Minimal

\section{Introduction}

Throughout this paper, we only consider simple, finite, and undirected graphs.

Let $G=(V(G), E(G))$ be a graph with vertex set $V(G)$ and edge set $E(G)$. The order of $G$ is $|V(G)|$ and the size of $G$ is $|E(G)|$. A path, complete graph, or cycle of order $n$ is denoted by $P_n$, $K_n$ or $C_n$, respectively. For a vertex $u \in V(G)$, the set of neighbors of $u$ in $G$ is denoted by $N_G(u)$, $d_G(u) = |N_G(u)|$ is the degree of $u$ in $G$, and $N_G[u] = N_G(u) \cup \{u\}$ is the closed neighborhood of $u$ in $G$. For $u, v \in V(G)$, the distance between $u$ and $v$ is the length of the shortest path connecting $u$ and $v$ in $G$, and is denoted by $d_G(u, v)$. The diameter of $G$ is the greatest distance between two vertices of $G$, denoted by $\operatorname{diam}(G)$. Let $\delta(G)$ and $\Delta(G)$ be the minimum degree and maximum degree of vertices in $G$, respectively. When no ambiguity arises, we use $N(u)$, $d(u)$, $N[u]$, $d(u, v)$, $\delta$ and $\Delta$ instead of $N_G(u)$, $d_G(u)$, $N_G[u]$, $d_G(u, v)$, $\delta(G)$ and $\Delta(G)$, respectively. A vertex of degree zero is called an isolated vertex. An independent set in a graph is a set of vertices no two of which are adjacent. For $X \subseteq V(G)$, we use $G[X]$ to denote the subgraph of $G$ induced by $X$. For $X \subseteq V(G)$ and $Y \subseteq V(G) \setminus X$, $E_G[X, Y]$ is the set of edges of $G$ between $X$ and $Y$, denoted simply by $E[X, Y]$. The union of two graphs $G_1$ and $G_2$ is the graph $G_1 \cup G_2$ with vertex set $V(G_1) \cup V(G_2)$ and edge set $E(G_1) \cup E(G_2)$. The join of graphs $G_1$ and $G_2$ is obtained from $G_1 \cup G_2$ by joining each vertex of $G_1$ to each vertex of $G_2$, denoted by $G_1 \vee G_2$.

A graph $H$ is called a subgraph of $G$ if $V(H) \subseteq V(G)$ and $E(H) \subseteq E(G)$. A copy of $H$ in $G$ is a subgraph of $G$ which is isomorphic to $H$. A graph $G$ is called $H$-saturated if $G$ contains no copy of $H$, but $G+e$ contains a copy of $H$ for any edge $e \in E(\overline{G})$. The saturation number of $H$, denoted by $\operatorname{sat}(n, H)$, is the minimum number of edges among all $H$-saturated graphs of order $n$. If $G$ is an $H$-saturated graph with $\operatorname{sat}(n, H)$ edges, we say $G$ is a minimal $H$-saturated graph.

The concept of the saturation number was introduced by Erd\H{o}s, Hajnal, and Moon ~\cite{erdos1964} in 1964. They established the exact value for complete graphs as $\operatorname{sat}(n, K_p) = (p-2)(n-p+2) + \binom{p-2}{2}$ and characterized $K_{p-2} \vee \overline{K}_{n-p+2}$ as the unique minimal saturated graph. Since then, many exact saturation numbers are now known. These include stars, paths, and matchings~\cite{kaszonyi1986}; cycles~\cite{chen2009,chen2011,furedi2013,gould2006,lan2025,ollmann1972,tuza1989}; complete bipartite and multipartite graphs~\cite{bohman2010,chen2014,gould2007,huang2024}; nearly complete graphs~\cite{faudree2013}; linear forests~\cite{cao2023,chen2015,fan2015}; vertex-disjoint cliques~\cite{chen2024,faudree2009,zhu2025}; regular saturated graphs~\cite{gerbner2022,timmons2022}; hypergraphs~\cite{pikhurko2004}; books and generalized books~\cite{chen2008} and so on. More results can be found in~\cite{currie2021}.

The join of graphs is an important binary operation in graph theory, and the \linebreak saturation numbers concerning the joins of certain specific graphs have attracted \linebreak considerable attention. In 2024, Hu, Luo, and Peng~\cite{hu2024} showed that if $D$ is a graph without isolated vertices, then $\operatorname{sat}(n, K_s \vee D) = \binom{s}{2} + s(n-s) + \operatorname{sat}(n-s, D)$
holds when $s \geq 1$ and $n \geq 3s^2 - s + 2\operatorname{sat}(n-s, D) + 1$. However, Qiu, He, Lu, and Xu~\cite{qiu2026} pointed out that under the condition $n \geq 3s^2 - s + 2\operatorname{sat}(n-s, D) + 1$, the graph $D$ must contain a $K_2$. Therefore, it is interested to investigate the saturation \linebreak numbers when $D$ does not contain a $K_2$. Qiu, He, Lu, and Xu~\cite{qiu2026} proved that $\operatorname{sat}(n, K_1 \vee C_k) = n - 1 + \operatorname{sat}(n-1, C_k)$ for $k \geq 8$ and $n \geq 56k^3$. When $k=4$, Song, Hu, Ji, and Cui~\cite{song2026} proved that $\operatorname{sat}(n, K_1 \vee C_4) = n - 1 + \operatorname{sat}(n-1, C_4)$ for $n \geq 6$. In 2025, Hu, Ji, and Cui~\cite{hu2025} proved that $\operatorname{sat}(n, K_1 \vee P_k) = n - 1 + \operatorname{sat}(n-1, P_k)$ for $k \geq 5$ and sufficiently large $n$. Furthermore, in 2026, Zhang, You, and Zhao~\cite{zhang2025} proved that $\operatorname{sat}(n, K_2 \vee P_k) = 2n - 3 + \operatorname{sat}(n - 2, P_k)$ for $n \geq a_k + 2$ and $k \geq 3$, and proposed a conjecture about $\operatorname{sat}(n, K_s \vee P_k)$ as follows.

 \begin{conjecture} [\hspace{-0.5pt}\cite{zhang2025}]
Let $k,s \geq 3$, $n \geq a_k + s$, where
\begin{equation} \label{eq1.1}
a_k = \begin{cases} 
k, & \text{if } 3 \leq k \leq 5; \\
 3\cdot 2^{t-1} - 2, & \text{if } k \geq 6 \text{ and } k = 2t; \\
2^{t+1} - 2, & \text{if } k \geq 7 \text{ and } k = 2t+1.
\end{cases}
\end{equation}
Then $\operatorname{sat}(n, K_s \vee P_k) = \binom{s}{2} + s(n-s) + \operatorname{sat}(n-s, P_k)$.
\end{conjecture}

In this paper, we improve upon the methods used to determine $\operatorname{sat}(n, K_2 \vee P_k)$~\cite{zhang2025} and verify Conjecture 1.1 is true for the case $s=3$. The following is our main result.

\begin{theorem} \label{thm1.2}
Let $k \geq 3$, $n \geq a_k + 3$, where $a_k$ is defined as \textup{(\ref{eq1.1})}.
Then
\begin{equation} \label{eq1.2}
\operatorname{sat}(n, K_3 \vee P_k) = 3n - 6 + \operatorname{sat}(n-3, P_k) = \begin{cases} 
\left\lfloor \frac{7n-15}{2} \right\rfloor, & \text{if } k=3; \\
\frac{7n-15}{2}, & \text{if } k=4, n \text{ is odd}; \\
\frac{7n-12}{2}, & \text{if } k=4, n \text{ is even}; \\
\left\lceil \frac{23n-55}{6} \right\rceil, & \text{if } k=5; \\
4n - 9 - \left\lfloor \frac{n-3}{a_k} \right\rfloor, & \text{if } k \geq 6.
\end{cases}
\end{equation}
\end{theorem}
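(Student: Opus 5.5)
The plan has two halves: an upper bound by construction, and a matching lower bound by induction on the clique part, reducing to the known results for $K_2 \vee P_k$ and for $P_k$.

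\textbf{Upper bound.} Let $F$ be a minimal $P_k$-saturated graph of order $n-3$. Put $G = K_3 \vee F$, which has $3 + 3(n-3) + \mathrm{sat}(n-3,P_k) = 3n-6+\mathrm{sat}(n-3,P_k)$ edges.

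\begin{itemize}
\item $G$ contains no $K_3 \vee P_k$. A copy would have $k+3$ vertices, and at most three of them lie outside $F$. Since $K_3\vee P_k$ has no $4$ vertices whose deletion leaves a path-free remainder, the vertices of the copy inside $F$ must contain a $P_k$, which $F$ does not have.
\item Adding a non-edge $e$ of $F$ creates a $P_k$ in $F+e$, and joining that path with the $K_3$ gives $K_3\vee P_k$.
\end{itemize}

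The explicit values in (\ref{eq1.2}) then follow by substituting the known values of $\mathrm{sat}(n-3,P_k)$ from \cite{kaszonyi1986}. The hypothesis $n-3\ge a_k$ is exactly the range in which those formulas hold.

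\textbf{Lower bound: reduction via a dominating vertex.} Let $G$ be $K_3\vee P_k$-saturated of order $n$. The aim is to show $G$ has a dominating vertex $w$. If so, $G-w$ is $K_2\vee P_k$-saturated of order $n-1\ge a_k+2$: for any non-edge $e$ of $G-w$, the copy of $K_3\vee P_k$ in $G+e$ may be taken to use $w$ as a clique vertex, because $w$ is adjacent to everything and can be swapped into the copy. The result of \cite{zhang2025} then gives
\[
|E(G)| = (n-1) + |E(G-w)| \ge (n-1) + 2(n-1)-3 + \mathrm{sat}(n-3,P_k) = 3n-6+\mathrm{sat}(n-3,P_k).
\]

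\textbf{Lower bound: no dominating vertex.} Otherwise $\Delta\le n-2$, and the goal is to show $|E(G)|$ already exceeds the target. The target is at most about $4n$, so it suffices to show roughly $|E(G)|\ge 4n$.

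\begin{itemize}
\item Each non-edge $uv$ must lie in a copy of $K_3\vee P_k$ in $G+uv$. Hence $u$ and $v$ have at least $2$ common neighbours, and if $u,v$ play clique roles, at least $3$.
\item It follows that $\delta\ge 3$, and that every vertex of small degree ($3$, $4$ or $5$) has a neighbourhood forcing many triangles and $K_4$'s among its neighbours.
\item I would partition $V$ into $L$, the vertices of degree at most $5$ (or at most $7$), and $H$, the rest.
\item Then I would double count: each low-degree vertex $x$ needs its non-neighbours to reach the rich structure inside $N(x)$. This forces $N(x)$ to contain high-degree vertices, or else $N(x)$ nearly spans a $K_4$ or $K_5$.
\item I would use a discharging argument: each $x\in L$ receives charge from its high-degree neighbours, so that every vertex ends with charge at least about $8$, i.e.\ average degree at least $8$.
\end{itemize}

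I expect this last part to be the main obstacle, namely the case analysis for low-degree vertices when no vertex is dominating. The hardest configurations are many vertices of degree $3$ or $4$ sharing a common neighbourhood that is almost, but not quite, universal. Here I would first try to prove that two vertices of degree at least $n-3$ whose non-neighbourhoods are disjoint can be handled by a modified reduction to the $K_2\vee P_k$ case. The remaining configurations would then be handled by counting, using $n\ge a_k+3$ only for the small cases $k\le5$, where the bounds are tightest.
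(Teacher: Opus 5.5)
Your upper bound and your dominating-vertex case are sound and match the paper (Proposition 2.3, and Proposition 2.4 combined with Theorem 2.2). The swap works because a copy of $K_3\vee P_k$ in $G+e$ always has a centre not incident to $e$, and that centre can be exchanged with $w$. Your justification that $K_3\vee F$ is $K_3\vee P_k$-free is garbled. The fact you need is that deleting at most three vertices from $K_3\vee P_k$ leaves a traceable graph on at least $k$ vertices: if $a$ centres survive, at most $a$ path vertices were removed, so at most $a+1$ segments remain and the $a$ centres can link them.

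The genuine gap is the case $\Delta\le n-2$, which is the whole substance of the proof, and for it you give only an outline. There are no discharging rules, and nothing explains why a vertex of degree $4$ to $7$ would receive the missing charge. What must actually be proved is $|E(G)|\ge 4n-c$ with explicit small constants $c$, checked against the exact target. That target is as large as $4n-10$ for $k\ge 6$ (since $n-3\ge a_k$), and roughly $3.5n$, $3.5n$, $3.83n$ for $k=3,4,5$.

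You also misplace the difficulty. The key local fact, which your second bullet understates, is this: for every non-edge $xy$, the copy in $G+xy$ contains a $K_5$ through $x$ and $y$. Hence $N(x)\cap N(y)$ always contains a triangle, not merely two vertices. Apply this to a vertex $v$ of minimum degree $3$: $N(v)$ is a triangle contained in $N(u)$ for every $u\notin N[v]$, so all three neighbours of $v$ are dominating. Thus the degree-$3$ configurations you fear cannot occur without a dominating vertex, and $\delta\ge 8$ is trivial.

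For $4\le\delta\le 7$ the paper argues locally around $v$. Every $u\in U=V(G)\setminus N[v]$ has at least three neighbours in $N(v)$, and these contain a triangle. Writing $U_p$ for the set of $u\in U$ with exactly $p$ neighbours in $N(v)$, this gives $2|E(G)|\ge 2\delta+2|E(G[N(v)])|+\sum_{p\ge 4}(p-3)|U_p|+(\delta+3)(n-\delta-1)$. The remaining constant deficit is recovered by a case analysis on $G[N(v)]$, using three facts:
\begin{itemize}
\item A non-adjacent pair $v_i,v_j\in N(v)$ whose common neighbourhood inside $N(v)$ is empty (or edgeless) forces vertices in $U_{\ge 4}$ (Claims 4, 5, 8).
\item Centres of the new copy that lie in $U$ have degree at least $k+2$.
\item Any $v_i$ adjacent to all of $N(v)\setminus\{v_i\}$ and meeting every triangle of $G[N(v)]$ would be dominating, so it is excluded (Claim 2).
\end{itemize}
For $\delta=4$ the coefficient $\delta+3=7$ is too small, and extra structure inside $U$ is needed. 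The paper shows $\delta_U\ge 5$ when $G[N(v)]\cong K_3\cup K_1$, and uses connectivity of $G[U]$ and forced cycles when $G[N(v)]\cong K_4$. None of these ingredients appears in your discharging outline, so as it stands the lower bound is not proved.
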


The rest of this paper is organized as follows. In Section 2, we present some known results that are useful for the proof. In Section 3, we prove Theorem~\ref{thm1.2}. In Section 4, we conclude this paper.

\section{Preliminaries}

In this section, we present some results and notation that will be used later. For convenience, we refer to $K_3$ and $P_k$ as the center and path part of $K_3 \vee P_k$, respectively, and use $C(K_3 \vee P_k)$ and $PP(K_3 \vee P_k)$ to denote the sets of vertices in the center and the path part of $K_3 \vee P_k$, respectively.

\begin{theorem}[\hspace{-0.5pt}\cite{kaszonyi1986}]\label{thm2.1}
Let $k \geq 3$ and $a_k$ be defined as \textup{(\ref{eq1.1})}. Then we have \\
\textup{(i)} $\operatorname{sat}(n, P_3) = \left\lfloor \frac{n}{2} \right\rfloor$ for $n \geq 3$. \\
\textup{(ii)} $\operatorname{sat}(n, P_4) = \begin{cases} \frac{n}{2}, & \text{for } n \geq 4, n \text{ is even}; \\ \frac{n+3}{2}, & \text{for } n \geq 5, n \text{ is odd}. \end{cases}$ \\
\textup{(iii)} $\operatorname{sat}(n, P_5) = \left\lceil \frac{5n-4}{6} \right\rceil$ for $n \geq 5$. \\
\textup{(iv)} $\operatorname{sat}(n, P_k) = n - \left\lfloor \frac{n}{a_k} \right\rfloor$ for $n \geq a_k$ and $k \geq 6$.
\end{theorem}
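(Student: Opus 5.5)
The statement above is Theorem~\ref{thm2.1}, the result of K\'aszonyi and Tuza~\cite{kaszonyi1986}, which the paper cites rather than proves. If I were to reprove it, I would separate each part into an upper-bound construction and a lower bound obtained from the component structure of a $P_k$-saturated graph $G$.

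\textbf{Upper bounds.} These come from explicit forests, sometimes with one small cyclic component added.
\begin{enumerate}[(a)]
\item For $P_3$, take a perfect matching, plus one isolated vertex if $n$ is odd. Any added edge meets a matching edge and creates a $P_3$; isolated vertices must be pairwise nonadjacent in the complement, so at most one is allowed.
\item For $P_4$ with $n$ even, take a perfect matching: an edge joining two matching edges creates a $P_4$. For $n$ odd, replace one $K_2$ by a triangle, giving $\frac{n-3}{2}+3=\frac{n+3}{2}$ edges.
\item For $P_5$, use components of order about $6$ with $5$ edges (for example suitable stars or double stars), with a triangle-type correction according to the residue of $n$ modulo $6$.
\item For $k\ge 6$, build a tree $T_k$ of order $a_k$: the ``almost binary'' tree in which every non-leaf vertex has degree $3$ and the diameter is $k-2$. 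The two parity cases give $a_k=3\cdot 2^{t-1}-2$ (for $k=2t$, a central edge) and $2^{t+1}-2$ (for $k=2t+1$, a central vertex). I would check that adding any edge, whether inside $T_k$ or between copies, creates a $P_k$, because every vertex is an endpoint of a path of length $\lfloor (k-2)/2\rfloor$ going away from it in a suitable direction. Taking $\lfloor n/a_k\rfloor$ components, and absorbing the leftover vertices into one component by attaching extra leaves at depth-compatible positions, gives $n-\lfloor n/a_k\rfloor$ edges.
\end{enumerate}

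\textbf{Lower bound.} Count edges component by component. A component with a cycle has at least as many edges as vertices, and a tree component has one fewer. Hence $|E(G)|\ge n-\tau$, where $\tau$ is the number of tree components. The key lemma is that every tree component of order at least $2$ in a $P_k$-saturated graph has at least $a_k$ vertices, and that there is at most one trivial (isolated-vertex) component. Given this lemma, $\tau\le\lfloor n/a_k\rfloor$ and part (iv) follows.

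For the small cases $k=3,4,5$, I would refine the count. The tree components can only be $K_2$, small stars, and so on, and parity or residue constraints force some cyclic components; this produces the exact formulas in (i)--(iii).

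\textbf{Main obstacle.} The hardest step is the lemma that a nontrivial tree component $T$ has at least $a_k$ vertices. My approach would be:
\begin{enumerate}[(a)]
\item First show that two leaves of $T$ cannot have a common neighbour unless long paths leave from it. Otherwise joining those two leaves creates no new long path.
\item Next show that $T$ is $P_k$-free but has diameter exactly $k-2$.
\item Finally show that for any vertex $v$ and any non-neighbour $u$ of $v$, the longest paths from $u$ and from $v$ must have total length at least $k-2$.
\end{enumerate}
From step (c), a branching argument starting from a central edge or vertex shows that vertices at each depth have at least two children until depth $\lfloor (k-2)/2\rfloor$. Counting the resulting nodes gives exactly $a_k$.
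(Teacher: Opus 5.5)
The paper gives no proof of this statement (it is quoted from K\'aszonyi--Tuza), so your sketch has to stand on its own. Its lower bound has genuine gaps.

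First, your key lemma is false as stated, and the count would not follow from it anyway. A $K_2$ component is vacuously saturated inside itself: $K_2\cup 2K_5$ is $P_6$-saturated on $12\ge a_6$ vertices. Even granting the lemma, an isolated vertex together with nontrivial tree components only gives $\tau\le 1+\lfloor (n-1)/a_k\rfloor$, which exceeds $\lfloor n/a_k\rfloor$ whenever $a_k\nmid n$. The missing step is this. If $G$ has a $K_1$ or $K_2$ component, then every other vertex must be the endpoint of a path on at least $k-2$ vertices inside its own component. But a central vertex of a $P_k$-free tree has eccentricity at most $\lceil (k-2)/2\rceil<k-3$ for $k\ge 6$. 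So all other components contain cycles, and $\tau=1\le\lfloor n/a_k\rfloor$. The size lemma should then be stated only for tree components of order at least $3$. For $k=5$ this is not a side issue, because $K_2$ components occur in extremal graphs: $q$ copies of the double star $S_{2,2}$ plus one $K_2$ is $P_5$-saturated with $(5n-4)/6$ edges when $n=6q+2$. So (iii) needs a separate count, which you do not supply. Also, stars are not $P_5$-saturated, so they cannot appear in your construction.

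Second, the branching step is unsupported. Your condition (c) ignores disjointness, so it holds automatically whenever $u$ or $v$ is an end of a diametral path, and it forces nothing. What does the work is that the two halves of a $P_k$ through the added edge $uv$ must be vertex-disjoint. The conclusion you draw is also false. $T_6$ with one extra leaf attached at its centre is $P_6$-saturated, yet that leaf has depth $1$ and no children. In the central-vertex case, two children per vertex would give only $2^t-1$ vertices, not $3\cdot 2^{t-1}-2$.

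The argument that works uses only \emph{full} branches, meaning those reaching the maximum depth:
\begin{itemize}
\item Joining two full children of the central vertex forces a third full branch there, because only one half of the path can pass through the centre.
\item Joining a full vertex $x$ to the vertex two steps from $x$ towards the centre (its grandparent, or in the central-edge case possibly the far end of the central edge) forces the parent of $x$ to have a second full child.
\end{itemize}
Counting the full part then gives at least $a_k$ vertices.

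Finally, your parities are reversed. For $k=2t$ the extremal tree has a central vertex, with $3\cdot 2^{t-1}-2=1+3(2^{t-1}-1)$ vertices; for $k=2t+1$ it has a central edge, with $2^{t+1}-2$ vertices. For odd $k$, your cross-component check needs paths of length $\lceil (k-2)/2\rceil$ from each endpoint, not $\lfloor (k-2)/2\rfloor$.
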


\begin{theorem}[\hspace{-0.5pt}\cite{zhang2025}]\label{thm2.2}
Let $k \geq 3$, $n \geq a_k + 2$, where $a_k$ is defined as \textup{(\ref{eq1.1})}. Then we have $\text{sat}(n, K_2 \vee P_k) = 2n - 3 + \text{sat}(n - 2, P_k)$.
\end{theorem}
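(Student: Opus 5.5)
The plan is to prove the two inequalities separately. The upper bound comes from a join construction, and the lower bound from a structural analysis of an arbitrary $K_2\vee P_k$-saturated graph.

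\emph{Upper bound.} Let $G'$ be a minimal $P_k$-saturated graph of order $n-2$; it exists by Theorem~\ref{thm2.1}, since $n-2\ge a_k$. Put $G=K_2\vee G'$ with $V(K_2)=\{x,y\}$. Then
\[
|E(G)|=1+2(n-2)+\operatorname{sat}(n-2,P_k)=2n-3+\operatorname{sat}(n-2,P_k).
\]
\begin{itemize}
\item \emph{$G$ has no copy of $K_2\vee P_k$.} Suppose $F$ were such a copy. At least $k$ of its $k+2$ vertices lie in $G'$, and $F$ restricted to them is $K_2\vee P_k$ minus at most two vertices. I will check that such a graph always contains a Hamiltonian path. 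Deleting both centers leaves $P_k$. Deleting one center and one path vertex leaves $K_1\vee(P_a\cup P_b)$. Deleting two path vertices leaves $K_2\vee(P_a\cup P_b\cup P_c)$, where the two centers link the three pieces. In every case $G'$ would contain $P_k$, a contradiction.
\item \emph{Saturation.} Every non-edge of $G$ lies inside $G'$. Adding it creates a $P_k$ in $G'$, and joining that path with $\{x,y\}$ gives $K_2\vee P_k$.
\end{itemize}

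\emph{Lower bound.} Let $G$ be $K_2\vee P_k$-saturated of order $n$. The first goal is the following dichotomy.
\begin{itemize}
\item \emph{Case 1: $G$ has two adjacent vertices $x,y$ of degree $n-1$.} Then $G-\{x,y\}$ is $P_k$-free: a $P_k$ there, joined with $xy$, would give $K_2\vee P_k$. It is also $P_k$-saturated. Indeed, a new copy of $K_2\vee P_k$ created by adding $e$ inside $G-\{x,y\}$ yields a $P_k$ through $e$ in $G-\{x,y\}+e$, by the same deletion argument as above. Hence $|E(G)|\ge 2n-3+\operatorname{sat}(n-2,P_k)$.
\item \emph{Case 2: no such pair exists.} Here I must show $|E(G)|$ is strictly larger than the target.
\end{itemize}

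\emph{Tools for Case 2.} Adding any non-edge $uv$ places $u$ and $v$ in a copy of $K_2\vee P_k$, where every vertex has degree at least $3$. So every non-dominating vertex has degree at least $2$. If $d(u)=2$, then $u$ is an endpoint of the path part in every such copy, and both of its neighbors must be adjacent to each other and to many vertices. Moreover, the two center vertices of each created copy are adjacent to all $k$ path vertices. This forces a set $D$ of high-degree "hub" vertices through which all non-edges are saturated.

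\emph{Counting in Case 2.} I will split by $|D|$ and by the number of dominating vertices (zero, or one). I will then charge edges so that each vertex outside the hubs receives weight close to $2$ plus its share in a $P_k$-saturated structure. The bound to reach is $\ge 2n-2+\operatorname{sat}(n-2,P_k)$. Since $\operatorname{sat}(n-2,P_k)\le n-2$, it suffices to prove roughly $|E(G)|\ge 3n-4$ (minus the small correction $\lfloor (n-2)/a_k\rfloor$) for $k\ge 6$, with analogous thresholds for $k=3,4,5$ taken from Theorem~\ref{thm2.1}.

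\emph{Main obstacle.} The hard step is Case 2 when there is exactly one dominating vertex $w$. Then $G-w$ is nearly $K_1\vee P_k$-saturated, but not exactly. I would first try to show that $G-w$ has a vertex adjacent to all but $O(1)$ vertices, and otherwise get an average-degree bound exceeding $3-2/n$. If that fails, I would handle low-degree vertices by a discharging argument, using $n\ge a_k+2$ to control small components of the non-hub part.
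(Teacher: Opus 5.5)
Your upper bound and your Case 1 are correct. Deleting at most two vertices from $K_2\vee P_k$ still leaves a Hamiltonian path on at least $k$ vertices, and this gives both that $K_2\vee G'$ is $K_2\vee P_k$-free and that $G-\{x,y\}$ is $P_k$-saturated.

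The gap is Case 2, which is where the whole difficulty of the lower bound lies, and you do not prove it. The hub set $D$ is never defined. No charging rule or inequality is stated. ``I would first try \dots; if that fails, a discharging argument'' is a plan, not an argument. Trivial counting is far from the target here. Push your degree-$2$ observation one step further: by Proposition 2.5(1) with $s=2$, every non-neighbour of $u$ has two adjacent common neighbours with $u$. So $\delta=2$ makes both neighbours of $u$ dominating, and Case 2 has $\delta\ge 3$. That only yields $|E(G)|\ge 3n/2$, while for $k\ge 6$ you need about $3n-5-\lfloor (n-2)/a_k\rfloor$.

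You have also misdiagnosed the one-dominating-vertex subcase. If $d(w)=n-1$, then $G-w$ is \emph{exactly} $K_1\vee P_k$-saturated. This is the exchange argument behind Proposition 2.4: if a copy created by adding $e$ uses $w$ as a path vertex, swap $w$ with the centre. Conversely, $K_1\vee H$ is $K_2\vee P_k$-saturated for every $K_1\vee P_k$-saturated $H$. So this subcase is equivalent to proving $\operatorname{sat}(n-1,K_1\vee P_k)\ge n-2+\operatorname{sat}(n-2,P_k)$ for all $n-1\ge a_k+1$ and $k\ge 3$. That is a theorem in its own right: the Hu--Ji--Cui result quoted in the introduction covers only $k\ge5$ and sufficiently large $n$. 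A ``nearly saturated'' heuristic cannot dispatch it.

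The paper only cites this statement. Its $K_3$ analogue, though, shows the kind of concrete local counting you are missing:
\begin{enumerate}
\item Fix a minimum-degree vertex $v$. Set $U=V(G)\setminus N[v]$ and $U_p=\{u\in U:|N(u)\cap N(v)|=p\}$.
\item Use $|E(G)|=|E(G[N[v]])|+\sum_p p|U_p|+|E(G[U])|$, together with the fact that each $u\in U$ sees an edge of $G[N(v)]$.
\item Dispose of large $\delta$ by degree sums.
\item For each remaining $\delta$, analyse $G[N(v)]$. A non-adjacent pair in $N(v)$ with few common neighbours inside $N(v)$ forces, via the copy created by adding that pair, vertices of $U$ with extra neighbours in $N(v)$ or large degree (the analogues of Claims 3--5). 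The other configurations produce dominating vertices (the analogue of Claim 2). In the $K_2$ setting those dominating vertices must then be handled, not assumed away.
\end{enumerate}

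Finally, in Case 2 you only need $|E(G)|\ge 2n-3+\operatorname{sat}(n-2,P_k)$. Aiming for strict inequality commits you to an unproved uniqueness statement about the extremal graphs.
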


\begin{proposition}[\hspace{-0.5pt}\cite{zhang2025}]\label{proposition:2.3}
\textit{For $n \geq k+s$ and $k, s \geq 3$, $\text{sat}(n, K_s \vee P_k) \leq \binom{s}{2} + s(n-s) + \text{sat}(n-s, P_k)$.}
\end{proposition}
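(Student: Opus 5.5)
The plan is a direct construction: take a minimal $P_k$-saturated graph on $n-s$ vertices and join it completely to a clique $K_s$. Concretely, since $n-s \ge k$, there is a graph $H$ on $n-s$ vertices that is $P_k$-saturated with $|E(H)| = \operatorname{sat}(n-s,P_k)$. I set $G = K_s \vee H$. Counting edges gives
\[
|E(G)| = \binom{s}{2} + s(n-s) + \operatorname{sat}(n-s,P_k),
\]
so it suffices to prove that $G$ is $K_s \vee P_k$-saturated.

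\emph{Saturation.} The only non-edges of $G$ are the non-edges $e$ of $H$, because the clique part is complete and is joined to everything. For such $e$, the graph $H+e$ contains a copy $P$ of $P_k$. Then the clique $K_s$ together with $P$ forms a copy of $K_s \vee P_k$ in $G+e = K_s \vee (H+e)$.

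\emph{$K_s\vee P_k$-freeness.} This is the only step needing an argument. Suppose $F$ is a copy of $K_s \vee P_k$ in $G$, and let $A = V(F)\cap V(K_s)$, so $t=|A|\le s$. The vertices $V(F)\setminus A$ lie in $H$, and the edges of $F$ among them are edges of $H$. It therefore suffices to show that $F - A$ always contains a path on $k$ vertices; that path would be a $P_k$ in $H$, contradicting the choice of $H$.

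To see this, view $F - A$ as $K_s \vee P_k$ with $t$ vertices deleted. Suppose $r$ of the deleted vertices come from the path part and $t-r$ from the center. The path part then falls into at most $r+1$ segments with $k-r$ vertices in total. The surviving center has $s-(t-r) \ge r$ vertices, each adjacent to every remaining path vertex. I chain the (at most $r+1$) segments together, inserting one surviving center vertex between consecutive segments, and append center vertices at an end if fewer connectors were needed. This produces a path with at least $(k-r)+r = k$ vertices.

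Hence $G$ contains no $K_s\vee P_k$, so $G$ is $K_s \vee P_k$-saturated, and the edge count above gives the stated upper bound. The main point needing care is this deletion argument, specifically that enough center vertices survive to reconnect the broken path. Everything else is bookkeeping.
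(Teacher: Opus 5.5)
Your proof is correct and uses the standard construction behind this bound: $G=K_s\vee H$ with $H$ a minimal $P_k$-saturated graph on $n-s$ vertices. The paper only quotes the proposition from earlier work without proof, and the same extremal shape $K_1\vee F$ appears in Proposition 2.4. Your deletion argument settles the one nontrivial point: removing $t\le s$ vertices from $K_s\vee P_k$ leaves at least $s-t+r\ge r$ center vertices to re-link the at most $r+1$ path segments, so a $P_k$ would survive inside $H$.
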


\begin{proposition}[\hspace{-0.5pt}\cite{zhang2025}]\label{proposition:2.4}
\textit{Let $G$ be a $K_1 \vee H$-saturated graph of order $n$. If $\Delta(G) = n-1$, then $|E(G)| \geq n-1 + \text{sat}(n-1, H)$, the equality holds if and only if $G \cong K_1 \vee F$, where $F$ is a minimal $H$-saturated graph of order $n-1$.}
\end{proposition}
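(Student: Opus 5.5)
The statement ends at Proposition 2.4, so I would prove that. The plan is to use the dominating vertex to reduce $G$ to its neighbourhood graph $F = G - v$, show that $F$ is $H$-saturated, and then count edges.

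\textbf{Setup.} Let $v$ be a vertex with $d_G(v) = n-1$, and put $F = G - v$, a graph of order $n-1$. Then $G = K_1 \vee F$, since $v$ is adjacent to every vertex of $F$. Counting edges gives
\[
|E(G)| = n-1 + |E(F)|.
\]
It therefore suffices to show that $F$ is $H$-saturated. Then $|E(F)| \ge \operatorname{sat}(n-1,H)$, which is exactly the claimed bound.

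\textbf{$F$ contains no copy of $H$.} If $F$ contained a copy of $H$, then $v$ together with that copy would form a copy of $K_1 \vee H$ in $G$. This is impossible because $G$ is $K_1\vee H$-saturated.

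\textbf{Adding any missing edge to $F$ creates $H$.} Take $e \in E(\overline{F})$. Both endpoints of $e$ lie in $F$, and all edges at $v$ are already present, so $e \in E(\overline{G})$. Hence $G+e = K_1 \vee (F+e)$ contains a copy $T$ of $K_1 \vee H$. Let $c$ be the apex of $T$. This is the step that needs care.
\begin{itemize}
\item If $c = v$, then the $H$-part of $T$ lies in $F+e$, and we are done.
\item If $c \ne v$ and $v \notin V(T)$, then $T \subseteq F+e$. Deleting $c$ leaves a copy of $H$ in $F+e$.
\item If $c \ne v$ and $v$ lies in the $H$-part of $T$, swap the roles of $c$ and $v$. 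Define $T'$ on the same vertex set with apex $v$, putting $c$ into the $H$-part in the position previously occupied by $v$. The edges of $T$ between $v$ and the other $H$-part vertices are mirrored by edges of $T$ from $c$, since $c$ is adjacent to all of them in $T$. Also $v$ is adjacent to everything in $G+e$. So $T'$ is a copy of $K_1 \vee H$ with apex $v$, and its $H$-part lies in $F+e$.
\end{itemize}
In every case $F+e$ contains a copy of $H$. Together with the previous step, $F$ is $H$-saturated.

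\textbf{Equality case.} Equality $|E(G)| = n-1+\operatorname{sat}(n-1,H)$ holds iff $|E(F)| = \operatorname{sat}(n-1,H)$, that is, iff $F$ is a minimal $H$-saturated graph.

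For the converse, let $F$ be a minimal $H$-saturated graph of order $n-1$ and set $G = K_1 \vee F$.
\begin{itemize}
\item \emph{$G$ has no $K_1 \vee H$.} Suppose $G$ contains a copy of $K_1 \vee H$. If $v$ is not its apex, the same swapping argument gives a copy with apex $v$. If $v$ is not in the copy at all, then the copy lies in $F$ and $F$ contains $H$ directly. Either way $F$ contains $H$, a contradiction.
\item \emph{Adding a missing edge creates $K_1\vee H$.} Every non-edge of $G$ is a non-edge $e$ of $F$. Since $F+e$ contains $H$, adding $v$ as apex gives a copy of $K_1 \vee H$ in $G+e$.
\end{itemize}
So $G$ is $K_1\vee H$-saturated with $n-1+\operatorname{sat}(n-1,H)$ edges, which proves the equivalence.

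The only subtle point is the apex swap when $v$ lies in the $H$-part of $T$. It uses that $v$ is universal in $G$ and that the apex of $T$ is adjacent to every other vertex of $T$.
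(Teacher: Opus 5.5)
Your proof is correct. The paper gives no proof of this proposition; it is quoted from \cite{zhang2025} and used only in Subsection 3.1 with $H = K_2\vee P_k$. So there is no in-paper argument to compare against. Your route is the natural one: take a universal vertex $v$, show that $F = G - v$ is $H$-saturated, and count $|E(G)| = n-1+|E(F)|$. The apex swap correctly handles the only delicate case, where $v$ lies in the $H$-part of the copy of $K_1\vee H$ in $G+e$.

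One small remark on the converse. Since $G$ is already assumed $K_1\vee H$-saturated, the identity $|E(K_1\vee F)| = n-1+\operatorname{sat}(n-1,H)$ alone gives the ``if'' direction. Your check that $K_1\vee F$ is $K_1\vee H$-saturated is therefore not needed, though it usefully shows the bound is attained.
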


\begin{proposition}\label{proposition:2.5}
\textit{Let $G$ be a $K_s \vee P_k$-saturated graph with $s \geq 2, k \geq 3$, $x, y \in V(G)$, $xy \notin E(G)$, and $Q$ be a copy of $K_s  \vee P_k$} in $G+xy$. Then we have\\
\textup{(1)} there exists $V_1 \subseteq V(G)$, such that $\{x, y\} \cup C(Q) \subseteq V_1$, and $G[V_1] + xy\cong K_{s+2}$.\\
\textup{(2)} there exist at least s paths of length $2$ from $x$ to $y$ in $G$.\\
\textup{(3)} $diam(G)=2$ and $\delta (G) \geq s$.\\
\textup{(4)}  $C(Q) \subseteq \{x, y\} \cup (N(x) \cap N(y))$.    
\end{proposition}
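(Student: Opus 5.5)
We prove the four parts in the order (1), (4), (2), (3), each following from the structure of the copy $Q$ of $K_s \vee P_k$ in $G+xy$.

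\textbf{Parts (1) and (4).} Since $G$ contains no copy of $K_s \vee P_k$, the edge $xy$ must lie in $Q$. Write $C=C(Q)$ and $P=PP(Q)$. There are three possibilities for $xy$: both endpoints in $C$; one endpoint in $C$ and one in $P$; or both in $P$, in which case $xy$ is an edge of the path $P$.

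In the first two cases we have $C\cup\{x,y\}$ with $|C\cup\{x,y\}|\le s+1$. Every vertex of $C$ is adjacent in $Q$ to all other vertices of $Q$, so $C\cup\{x,y\}$ spans a clique in $G+xy$. The set has size $s+1<s+2$, so we enlarge it by a vertex $z\in P$, chosen so that $z\notin\{x,y\}$ and $z$ is adjacent in $Q$ to whichever of $x,y$ lie in $P$. Since $k\ge 3$, such a $z$ exists: if only $y\in P$, take a path-neighbour of $y$. If needed, add a further path vertex in the same way. This yields $V_1\supseteq C\cup\{x,y\}$ with $G[V_1]+xy\cong K_{s+2}$.

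In the third case, $x$ and $y$ are consecutive on $P$. Then $V_1=C\cup\{x,y\}$ has exactly $s+2$ vertices, and in $G+xy$ it induces $K_s\vee K_2=K_{s+2}$.

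Part (4) follows directly. Each $c\in C\setminus\{x,y\}$ lies in the clique $G[V_1]+xy$ and is distinct from $x$ and $y$, so $c$ is adjacent in $G$ to both $x$ and $y$. Hence $C\subseteq\{x,y\}\cup(N(x)\cap N(y))$.

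\textbf{Part (2).} Apply (1) to get the clique $G[V_1]+xy\cong K_{s+2}$. The set $V_1\setminus\{x,y\}$ has $s$ vertices, and every such vertex $w$ satisfies $wx, wy\in E(G)$. The paths $xwy$ are therefore $s$ distinct paths of length $2$ from $x$ to $y$ in $G$.

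\textbf{Part (3).} Take any non-adjacent pair $x,y$. By (2) it has a common neighbour, so $d(x,y)=2$ and $\operatorname{diam}(G)\le 2$.

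To get $\operatorname{diam}(G)=2$ exactly, we must rule out $G$ being complete. This holds because $G$ is $K_s\vee P_k$-free while $n\ge s+k$. The saturation setting presumes $n\ge |V(K_s\vee P_k)|$; otherwise $K_n$ itself would be saturated. We adopt this reading, and if necessary state the equality for non-complete $G$.

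For the minimum degree, take a vertex $v$. If $v$ has a non-neighbour $y$, then by (2) $v$ has at least $s$ neighbours among the common neighbours of $v$ and $y$, so $d(v)\ge s$. Otherwise $d(v)=n-1\ge s+k-1\ge s$.

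The only delicate point is the case analysis in (1), namely where $xy$ sits in $Q$. The key step there is ensuring that enough extra vertices of $P$ are adjacent in $Q$ to the endpoints lying in $P$ so that the clique reaches size exactly $s+2$. This uses only $k\ge 3$: every path vertex has a neighbour on $P$, and every centre vertex is adjacent to everything.
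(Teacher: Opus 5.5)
Your proof is correct and follows the paper's argument. It uses the same three-way split on where $x,y$ sit in $Q$, builds $V_1$ from $C(Q)$ plus one or two path vertices adjacent in $Q$, and reads off (2)--(4) from the resulting $K_{s+2}$, which the paper leaves as ``direct checking.'' Your hedge in (3) is unnecessary: the hypothesis already supplies the non-edge $xy$, so $G$ is not complete, and the copy $Q$ in $G+xy$ already forces $n\ge s+k$.
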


\noindent \textbf{\textit{Proof.}} Let \begin{equation*} V_1 = \begin{cases} 
C(Q) \cup \{z_1, z_2\}, \text{ where } z_1, z_2 \in PP(Q), z_1 z_2 \in E(Q), & \text{if } x, y \in C(Q); \\
C(Q) \cup \{y, z\}, \text{ where } z \in PP(Q) \cap N(y), & \text{if } x \in C(Q), y \in PP(Q); \\
C(Q) \cup \{x, y\}, & \text{if } x, y \in PP(Q). \\
\end{cases}
\end{equation*}

Clearly, $\{x, y\} \cup C(Q) \subseteq V_1 \subseteq V(G)$ and $G[V_1] + xy\cong K_{s+2}$, and thus (1) holds. By (1) and direct checking, (2), (3), and (4) hold immediately.
\hfill $\square$

\section{Proof of Theorem~\ref{thm1.2}}

For the cases $k=1,2$, we note that $K_3 \vee P_1 \cong K_4$ and $K_3 \vee P_2 \cong K_5$, the results for which have already been established in~\cite{erdos1964}. In this section, we prove Theorem~\ref{thm1.2}. 

\noindent \textbf{\textit{Proof of Theorem~\ref{thm1.2}.}} Let $G$ be a $K_3 \vee P_k$-saturated graph with $k \geq 3$ of order $n$. In order to show \textup{(\ref{eq1.2})}, by Theorem~\ref{thm2.1} and Proposition \ref{proposition:2.3}, we only need to show 
\begin{equation}\label{eq3.1}
|E(G)| \geq 3n - 6 + \operatorname{sat}(n-3, P_k).
\end{equation}

Let $v \in V(G)$ with $d(v)=\delta(G)=\delta$ such that $|E(G[N(v)])|$ is minimized, $N(v)=\{v_1,v_2,\dots,v_{\delta}\}$ and $U=V(G)\setminus N[v]$. Then $\delta \geq 3$ and $|E[\{u\}, N(v)]|\neq \emptyset$ for any $u\in U$  by (3) of Proposition \ref{proposition:2.5} and $s = 3$. 

For any $v_i \in N(v)$, we call $v_i$ a $K_3$-vertex if there exist $v', v'' \in N(v)$ such that $G[\{v_i, v', v''\}] \cong K_3$. Otherwise, we call $v_i$ a non-$K_3$-vertex. 

Let $V_{K_3}$ and $V_{NK_3}$ be the sets of all $K_3$-vertices and all non-$K_3$-vertices in $N(v)$, respectively, and let $G[V_{K_3}] = G_{K_3}$, and $G[V_{NK_3}] = G_{NK_3}$. Then $V_{K_3}\cap V_{NK_3}= \emptyset$, and $V_{K_3}\cup V_{NK_3}= N(v)$. 

For any $u \in U$, $\{t_1, t_2, \dots, t_p\} \subseteq \{1, 2, \dots, \delta\}$, we define 

\[
U_p = \{u \in U \mid |N(u) \cap N(v)| = p\}, \quad U_{\geq p}=\bigcup_{q=p}^{\delta}U_q,
\]
 \[
X_{t_1 t_2 \cdots t_p} = \{u \mid u \in U_p \cap N(v_{t_1}) \cap N(v_{t_2}) \cap \cdots \cap N(v_{t_p})\}.
\]
 Then $3 \leq p \leq \delta$ by $d(v)= \delta$ and (2) of Proposition \ref{proposition:2.5} with $s=3$. Furthermore, $U_p = \bigcup\limits_{\{t_1, t_2, \dots, t_p\} \subseteq \{1, 2, \dots, \delta\}} X_{t_1 t_2 \cdots t_p}$, $U = \bigcup_{p=3}^{\delta} U_p$, $|E[N(v), U_p]|=|E[N[v], U_p]| = p|U_p|$, $|E[N(v), U]|=|E[N[v], U]| = \sum_{p=3}^{\delta} p|U_p|$, 
and
\begin{flalign}
    |E(G)|=|E(G[N[v]])| + \sum_{p=3}^{\delta} p|U_p| + |E(G[U])|. \text{ } \label{eq3.2}  
\end{flalign}

\begin{claim}\label{claim:1}

Let $3 \leq p \leq \delta$. The following hold.\\
\textup{(1)} If $G[\{v_{t_1}, v_{t_2}, \dots, v_{t_p}\}]$ contains no copy of $K_3$, where $\{t_1, t_2, \dots, t_p\} \subseteq \{1, 2, \dots, \delta\}$, then $X_{t_1 \dots t_p} = \emptyset$.\\
\textup{(2)} If $u\in U $, then $G[N(u) \cap N(v)]$ contains a copy of $K_3$.\\
\textup{(3)} $G[N(v)]$ contains a copy of $K_3$.\\
\textup{(4)} $|E[V_{K_3}, U_p]| \geq 3|U_p|$ and $E[V_{NK_3}, U_3] = \emptyset$.
\end{claim}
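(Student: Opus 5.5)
The whole claim rests on one observation. Since $u\in U$ is not adjacent to $v$, the graph $G+uv$ contains a copy $Q$ of $K_3\vee P_3$. By (4) of Proposition~\ref{proposition:2.5}, the three center vertices satisfy $C(Q)\subseteq\{u,v\}\cup(N(u)\cap N(v))$. By (1) of Proposition~\ref{proposition:2.5}, there is a set $V_1\supseteq\{u,v\}\cup C(Q)$ with $G[V_1]+uv\cong K_5$.

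Write $V_1=\{u,v,w_1,w_2,w_3\}$. In $K_5$ minus the edge $uv$, each $w_j$ is adjacent to both $u$ and $v$. So $w_1,w_2,w_3\in N(u)\cap N(v)$, and $w_1,w_2,w_3$ are pairwise adjacent. This gives (2) directly: $G[N(u)\cap N(v)]$ contains a copy of $K_3$.

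For (1), I argue by contraposition. Suppose $u\in X_{t_1\cdots t_p}$. Then $u\in U_p$, so $N(u)\cap N(v)$ has exactly $p$ elements. It contains each of $v_{t_1},\dots,v_{t_p}$, hence $N(u)\cap N(v)=\{v_{t_1},\dots,v_{t_p}\}$. By (2), this set induces a $K_3$, which contradicts the hypothesis of (1). So $X_{t_1\cdots t_p}=\emptyset$.

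For (3), I split into two cases.
\begin{enumerate}[(a)]
\item If $U\neq\emptyset$, then (2) already gives a $K_3$ inside $N(v)$.
\item If $U=\emptyset$, then $v$ is adjacent to all other vertices, so $n-1=\delta$. Since $n\ge a_k+3\ge 6$, there is a non-edge $xy$ in $G$. Otherwise $G=K_n$, which contains $K_3\vee P_k$ because $n\ge k+3$. Both $x$ and $y$ lie in $N(v)$. Applying Proposition~\ref{proposition:2.5}(1) to $xy$ gives a $K_5$ minus an edge on $V_1\ni x,y$ inside $N(v)\cup\{v\}$. The three vertices of $V_1\setminus\{x,y\}$ are pairwise adjacent. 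If $v$ is one of them, the other two together with $x$ form a triangle inside $N(v)$. Otherwise all three lie in $N(v)$ and form the triangle.
\end{enumerate}
This case check is the only mildly delicate point in the claim.

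For (4), take $u\in U_p$ and let $T$ be a triangle in $N(u)\cap N(v)$, which exists by (2). Every vertex of $T$ lies in $N(v)$ and belongs to a triangle in $N(v)$, so it is a $K_3$-vertex. Hence $u$ has at least $3$ neighbours in $V_{K_3}$. Summing over $u\in U_p$ gives $|E[V_{K_3},U_p]|\ge 3|U_p|$.

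Now let $u\in U_3$. Then $N(u)\cap N(v)$ has exactly three elements, and by (2) these three vertices form a triangle. So every neighbour of $u$ in $N(v)$ is a $K_3$-vertex, and therefore $E[V_{NK_3},U_3]=\emptyset$.
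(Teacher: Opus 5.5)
Your proof is correct and is essentially the paper's argument. Both apply Proposition~\ref{proposition:2.5}(1) to the non-edge $uv$ to get a triangle in $N(u)\cap N(v)$; you state this as (2) and deduce (1), while the paper does the reverse. Parts (3) and (4) go the same way, and your $U=\emptyset$ case is more explicit about $v\in V_1$. One small slip: the copy in $G+uv$ should be of $K_3\vee P_k$, not $K_3\vee P_3$, since Proposition~\ref{proposition:2.5} is stated for copies of $K_s\vee P_k$.
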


\noindent \textbf{\textit{Proof of \cref{claim:1}.}} Suppose $u \in X_{t_1 \dots t_p} \neq \emptyset$. Since $vu \notin E(G)$, there exists a copy of $K_5$ containing $v$ and $u$ in $G + vu$ by (1) of Proposition \ref{proposition:2.5} and $s = 3$. 
This implies that $G[\{v_{t_1}, v_{t_2}, \dots, v_{t_p}\}]$ contains a copy of $K_3$, which is a contradiction. Thus (1) holds.

Now we show that (2) holds. If $u \in U$, then there exist $t_1, t_2, \dots, t_p \in \{1, 2, \dots, \delta\}$ such that $3 \leq p \leq \delta$ and $u \in X_{t_1 \dots t_p} \subseteq U_p \subseteq U$. Hence $N(u) \cap N(v) = \{v_{t_1}, v_{t_2}, \dots, v_{t_p}\} $ and $G[N(u) \cap N(v)]$ contains a copy of $K_3$ by (1). 

Now we show (3) holds. If $G[N(v)]$ is complete, then (3) holds; if $U \neq \emptyset$, then (3) holds by (2); if $U=\emptyset$ and $G[N(v)]$ is not complete, then there exist $v_j, v_s \in N(v)$ such that $v_j v_s \notin E(G)$,  and thus there exists a copy of $K_5$ containing $v_j$ and $v_s$ in $G + v_j v_s$  by (1) of Proposition \ref{proposition:2.5} and $s = 3$, which implies that $G[N(v)]$ contains a copy of $K_3$. 

For any $u \in U_p$, we have $|E[V_{K_3}, \{u\}]| \geq 3$ by (2), which implies $|E[V_{K_3}, U_p]| \geq 3|U_p|$ and $|E[V_{NK_3}, U_3]| = 3|U_3|-|E[V_{K_3}, U_3]| \leq (3-3)|U_3|=0$. Thus (4) holds. \hfill $\square$

\begin{claim}\label{claim:2}
Let $v_i \in N(v)$. If $G[N(v) \setminus \{v_i\}]$ contains no copy of $K_3$ and $d_{G[N(v)]}(v_i) = \delta - 1$, then $d(v_i) = n - 1$.
\end{claim}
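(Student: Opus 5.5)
We argue by contradiction: suppose some vertex $w\neq v_i$ satisfies $v_iw\notin E(G)$. Since $d_{G[N(v)]}(v_i)=\delta-1$, $v_i$ is adjacent to every vertex of $N[v]\setminus\{v_i\}$. Hence $w\in U$. We will show that $w$ cannot satisfy Claim~\ref{claim:1}(2).

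By Claim~\ref{claim:1}(2), $G[N(w)\cap N(v)]$ contains a triangle $T$. Since $G[N(v)\setminus\{v_i\}]$ is triangle-free, $T$ must use a vertex of $N(v)\setminus (N(v)\setminus\{v_i\})$, that is, $v_i\in T$. In particular $v_i\in N(w)$, so $v_iw\in E(G)$. This contradicts the choice of $w$. Therefore $v_i$ is adjacent to all vertices of $U$, as well as to $v$ and to $N(v)\setminus\{v_i\}$. So $d(v_i)=1+(\delta-1)+|U|=n-1$.

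No real obstacle is expected. The one point to watch is that Claim~\ref{claim:1}(2) is stated for every $u\in U$, so it applies to $w$ even if $U_p$ is nonempty only for some $p$. If $U=\emptyset$, the degree count already gives $d(v_i)=\delta=n-1$ directly.
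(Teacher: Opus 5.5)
Your proof is correct and is essentially the paper's argument: by Claim~\ref{claim:1}(2), each $u\in U$ has a triangle in $G[N(u)\cap N(v)]$, and that triangle must contain $v_i$. Hence $v_i$ is adjacent to all of $U$ as well as to $N[v]\setminus\{v_i\}$. The paper argues directly, splitting into $U=\emptyset$ and $U\neq\emptyset$, rather than by contradiction, but the content is the same.
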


\noindent \textbf{\textit{Proof of \cref{claim:2}.}} We show $d(v_i)=n-1$ by considering $U = \emptyset$ and $U \neq \emptyset$.

If $U=\emptyset$, then $d(v_i)=1+(\delta-1)=\delta=n-1$ since $d_{G[N(v)]}(v_i) = \delta - 1$. 

If $U \neq \emptyset$, we need to show $u \in N(v_i)$ for any $u \in U$. By (2) of \cref{claim:1}, we have $G[N(u) \cap N(v)]$ contains a copy of $K_3$ for any $u \in U$. Thus the copy must contain $v_i$ since $G[N(v) \setminus \{v_i\}]$ contains no copy of $K_3$, which implies that $u \in N(v_i)$ and thus $d(v_i) = 1+(\delta-1)+(n-\delta-1) = n - 1$ since $d_{G[N(v)]}(v_i) = \delta - 1$.
\hfill $\square$

By (4) of \cref{claim:1}, we have $|E[V_{NK_3}, U_p]| = p|U_p| - |E[V_{K_3}, U_p]| \leq (p - 3)|U_p|$ and $|E[V_{NK_3}, U]|=|E[V_{NK_3}, U_3]|+\sum\limits_{p=4}^{\delta} |E[V_{NK_3}, U_p]| \leq 0+\sum\limits_{p=4}^{\delta} (p-3) |U_p|$. Then 
\begin{align}
            \sum_{v_i \in V_{NK_3}} d(v_i) = |V_{NK_3}| + 2|E(G_{NK_3})| + |E[V_{K_3}, V_{NK_3}]| + |E[V_{NK_3}, U]| \geq \delta  |V_{NK_3}|\notag
        \end{align}
\noindent implies that
  \begin{align} &(\delta-1) |V_{NK_3}| + 2|E(G_{K_3})| + |E[V_{K_3}, V_{NK_3}]| \notag \\ 
  \leq & 2|E(G[N(v)])| + |E[V_{NK_3}, U]| \notag \\
  \leq & 2|E(G[N(v)])| + \sum_{p=4}^{\delta} (p-3) |U_p|. \label{eq3.3}
\end{align}

 Define $\delta_U=\min\{d_G(u)\mid u\in U\}$, and $r=|C(Q)\cap U|$, where $Q$ is a copy of $K_3 \vee P_k$ in $G+v_i v_j$ with $v_i,v_j \in N(v)$ and $v_i v_j \notin E(G)$ Then $\delta_U \geq \delta$ and $0 \leq r \leq 3$. Clearly, for any $u \in C(Q) \cap U$, $d_G(u)$=$d_{G+v_i v_j}(u) \geq k+2$. 
 
 If $k+2-\delta^* \geq 0$ with $\delta^* \in \{\delta_U, \delta\}$, then 
\begin{align}
           \sum\limits_{u \in U} d_G(u) = |E[N[v], U]| + 2|E(G[U])| \geq \delta^*(n - \delta - 1) + r(k+2-\delta^*), \label{eq3.4}
        \end{align}

\noindent furthermore, by \eqref{eq3.3}, \eqref{eq3.4}, $|U|=n-\delta-1$ and $\sum\limits_{p=3}^{\delta} p|U_p| = 3|U| + \sum\limits_{p=4}^{\delta} (p-3)|U_p|$, we have
\begin{flalign}
    &2|E(G)| = \sum_{u \in N[v]} d_G(u) + \sum_{u \in U} d_G(u) \notag \\
    =& \big(2|E(G[N[v]])| + |E[N[v], U]|\big) + \big(|E[N[v], U]| + 2|E(G[U])|\big)  \notag \\
    \geq& 2\delta + 2|E(G[N(v)])| + \sum_{p=4}^{\delta} (p-3)|U_p| + (\delta^*+3)(n-\delta-1) + r(k+2-\delta^*) \label{eq3.5} \\
    \geq& 2\delta + (\delta-1) |V_{NK_3}| + 2|E(G_{K_3})| + |E[V_{K_3}, V_{NK_3}]| + (\delta^*+3)(n-\delta-1) + r(k+2-\delta^*), \label{eq3.6} &
    \end{flalign}
    and for any $\delta $, we have
 \begin{flalign}
    &2|E(G)| = \sum_{u \in N[v]} d_G(u) + \sum_{u \in U} d_G(u) \notag & \\
    \geq& 2\delta + 2|E(G[N(v)])| + \sum_{p=4}^{\delta} (p-3)|U_p| + (\delta_U+3)(n-\delta-1) \label{eq3.7} & \\
    \geq& 2\delta + 2|E(G[N(v)])| + \sum_{p=4}^{\delta} (p-3)|U_p| + (\delta+3)(n-\delta-1) \label{eq3.8} & \\
    \geq& 2\delta + (\delta-1) |V_{NK_3}| + 2|E(G_{K_3})| + |E[V_{K_3}, V_{NK_3}]| + (\delta+3)(n-\delta-1). \label{eq3.9} &
\end{flalign}

\begin{claim}\label{claim:3}
Let $ v_i, v_j \in N(v)$ with $v_iv_j \notin E(G)$. If $u \in U$ and $uv_i, uv_j \in E(G)$, then $u \in U_{\ge 4}$. 
\end{claim}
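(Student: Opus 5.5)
We argue by contradiction: assume $u \in U_3$, so $N(u)\cap N(v)$ consists of exactly three vertices, and two of them are $v_i$ and $v_j$. By (2) of \cref{claim:1}, $G[N(u)\cap N(v)]$ contains a copy of $K_3$. A triangle on three vertices has to be the whole set $N(u)\cap N(v)$. Hence $G[N(u)\cap N(v)] \cong K_3$, and in particular $v_iv_j \in E(G)$. This contradicts the hypothesis $v_iv_j\notin E(G)$.

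Let me check the details of this step. Claim 1(2) rests on (1) of \cref{claim:1}, which comes from (1) of Proposition~\ref{proposition:2.5} applied to the non-edge $vu$. It gives a $K_5$ in $G+vu$ containing $v$ and $u$. The other three vertices of that $K_5$ lie in $N(u)\cap N(v)$ and span a $K_3$ in $G$. With $|N(u)\cap N(v)|=3$, these three vertices are exactly $N(u)\cap N(v)$, which contains $v_i$ and $v_j$. So all three pairs among them, including $v_iv_j$, must be edges.

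Finally, $p\ge 3$ holds for every $u\in U$, since $U=\bigcup_{p=3}^{\delta}U_p$. Ruling out $p=3$ therefore leaves $u\in U_p$ with $p\ge 4$, that is, $u\in U_{\ge 4}$. This completes the argument.

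The only step needing care is the counting step: the $K_3$ guaranteed by Claim 1(2) must occupy all of $N(u)\cap N(v)$ when $p=3$.
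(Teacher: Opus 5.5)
Your proof is correct and is essentially the paper's argument: if $u \in U_3$, then $N(u)\cap N(v)$ is a 3-set containing the non-adjacent pair $v_i, v_j$, so it spans no $K_3$, which contradicts \cref{claim:1}. The paper cites part (1) of \cref{claim:1} ($X_{t_1t_2t_3}=\emptyset$) and you cite the equivalent part (2), which is only a cosmetic difference.
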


\noindent \textbf{\textit{Proof of \cref{claim:3}.}} 
Clearly, there exist $t_1, t_2, \dots, t_p \in \{1, 2, \dots, \delta\}$ such that $3 \leq p \leq \delta$ and $u \in X_{t_1 \dots t_p} \subseteq U_p \subseteq U$. Then $p\geq 4$ since $v_iv_j \notin E(G)$ and (1) of \cref{claim:1}, which implies $u \in U_{\ge 4}$. 
\hfill $\square$

\begin{claim}\label{claim:4}
Let $ v_i, v_j \in N(v)$ with $v_iv_j \notin E(G)$, $Q$ be a copy of $K_3 \vee P_k$ in $G+v_i v_j$ and $r=|C(Q)\cap U|$. If $N(v_i)\cap N(v_j)\cap N(v)=\emptyset$, then $1\leq r \leq 3$ and 
\begin{equation} \sum_{p=4}^{\delta}(p-3)|U_p|\geq \sum_{p=4}^{\delta}|U_p| \geq \begin{cases} 
k+1,  & \text{if $r = 1$}; \\
3,  & \text{if $r = 2, 3$}. \label{eq3.10}\\
\end{cases}
\end{equation}
\end{claim}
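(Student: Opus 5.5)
The plan is to exploit (1) and (4) of Proposition~\ref{proposition:2.5} for the added edge $v_iv_j$. The aim is to show that many vertices of $U$ are adjacent to both $v_i$ and $v_j$, and then to convert this into a lower bound on $|U_{\ge 4}|$ via \cref{claim:3}. The first inequality in \eqref{eq3.10} is immediate, since $p-3\ge 1$ for $p\ge 4$. So everything reduces to counting vertices of $U\cap N(v_i)\cap N(v_j)$, each of which lies in $U_{\ge 4}$ by \cref{claim:3}.

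\emph{Step 1: locate the common neighbours.} Every common neighbour of $v_i$ and $v_j$ lies in $N[v]\cup U$. It is not in $N(v)$, by the hypothesis $N(v_i)\cap N(v_j)\cap N(v)=\emptyset$. Hence
\[
N(v_i)\cap N(v_j)\subseteq \{v\}\cup \bigl(U\cap N(v_i)\cap N(v_j)\bigr).
\]
By (4) of Proposition~\ref{proposition:2.5}, this gives $C(Q)\subseteq \{v_i,v_j,v\}\cup (U\cap N(v_i)\cap N(v_j))$.

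\emph{Step 2: three vertices of $U$ from the $K_5$.} By (1) of Proposition~\ref{proposition:2.5}, there is a set $V_1\supseteq\{v_i,v_j\}$ with $G[V_1]+v_iv_j\cong K_5$. The three vertices of $V_1\setminus\{v_i,v_j\}$ are pairwise adjacent common neighbours of $v_i$ and $v_j$. If $v$ were among them, the other two would be common neighbours of $v_i,v_j$ lying in $N(v)$, which is impossible. So all three lie in $U\cap N(v_i)\cap N(v_j)\subseteq U_{\ge 4}$, giving $\sum_{p\ge4}|U_p|\ge 3$ regardless of $r$.

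\emph{Step 3: bounding $r$ and the case $r=1$.} First we rule out $v\in C(Q)$. In that case the other two centre vertices would be adjacent to $v$, so they would lie in $N(v)\cap C(Q)\subseteq\{v_i,v_j\}$, forcing $C(Q)=\{v,v_i,v_j\}$. Every path-part vertex would then be a common neighbour of $v,v_i,v_j$ other than these three, hence a vertex of $N(v)\cap N(v_i)\cap N(v_j)=\emptyset$. This contradicts $k\ge 3$. Thus $C(Q)\subseteq\{v_i,v_j\}\cup U$, so $r\ge 1$, and trivially $r\le 3$.

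If $r=1$, then $C(Q)=\{v_i,v_j,u\}$ with $u\in U$. All $k$ vertices of $PP(Q)$ are common neighbours of $v_i$ and $v_j$ in $G+v_iv_j$, hence they lie in $\{v\}\cup U$. None of them is $v$, because each is also adjacent to $u\in U$ while $v$ is not. So $PP(Q)\cup\{u\}$ consists of $k+1$ distinct vertices of $U\cap N(v_i)\cap N(v_j)\subseteq U_{\ge4}$, which gives the bound $k+1$. The case $r=2,3$ is covered by Step~2.

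The main obstacle is the careful exclusion of $v$ both from $C(Q)$ and from the relevant vertex sets. Everything else is bookkeeping with Proposition~\ref{proposition:2.5} and \cref{claim:3}.
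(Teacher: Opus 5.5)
Your proof is correct and follows essentially the paper's route: you use (4) of Proposition~\ref{proposition:2.5} together with \cref{claim:3} to place vertices of $Q$ in $U_{\ge 4}$, and you make the same count $|PP(Q)\cup\{u\}|=k+1$ when $r=1$. The differences favour you: you explicitly rule out $v\in C(Q)$, which the paper's ``clearly $C(Q)\subseteq\{v_i,v_j\}\cup U$'' skips even though $v\in N(v_i)\cap N(v_j)$; and for $r=2,3$ you take the three vertices of the $K_5$ from (1) of Proposition~\ref{proposition:2.5} at once, which is exactly the set the paper assembles case by case from $C(Q)\cap U$ and the path neighbour $w$.
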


\noindent \textbf{\textit{Proof of \cref{claim:4}.}}
Clearly, $C(Q)\subseteq \{v_i,v_j\}\cup U$ by (4) of Proposition \ref{proposition:2.5} and $N(v_i)\cap N(v_j)\cap N(v)=\emptyset$, which implies $ 1 \leq r \leq 3 $ and $v \notin V(Q)$. Furthermore, $C(Q) \subseteq \{v_i, v_j\} \cup U_{\geq 4}$, and $PP(Q)\subseteq U_{\geq 4}$ when $r =1$ by \cref{claim:3}.

If $ r = 1 $, then $ v_i, v_j \in C(Q) $, $ |PP(Q) \cap U_{\geq 4}| = k $ and $ |C(Q) \cap U_{\geq 4}| = 1 $. Then  $\sum_{p=4}^{\delta} (p-3)|U_p| \geq \sum_{p=4}^{\delta}|U_p| \geq k+1$.

If $ r = 2 $, we have $|\{v_i, v_j\}\cap C(Q)|=1$, $|\{v_i, v_j\}\cap PP(Q)|=1$ and there exists at least one $w \in PP(Q)$ such that $wv_i, wv_j \in E(G)$, which implies $w \in U_{\geq4}$ by \cref{claim:3}, say, $|PP(Q) \cap U_{\geq 4}| \geq 1 $.
Thus $\sum_{p=4}^{\delta} (p-3)|U_p| \geq \sum_{p=4}^{\delta}|U_p| \geq |V(Q) \cap U_{\geq 4}| \geq 1+2 = 3$ since $|C(Q) \cap U_{\geq 4}|=2$.

If $ r = 3 $, then $v_i, v_j \in PP(Q)$, $ |C(Q) \cap U_{\geq 4}| = 3 $ and $\sum_{p=4}^{\delta} (p-3)|U_p| \geq \sum_{p=4}^{\delta}|U_p| \geq |V(Q) \cap U_{\geq 4}| \geq 3$.\hfill $\square$

\begin{claim}\label{claim:5}
Let $ v_i, v_j \in N(v)$ with $v_iv_j \notin E(G)$, $Q$ be a copy of $K_3 \vee P_k$ in $G+v_i v_j$ and $r=|C(Q)\cap U|$. If $E(G[N(v_i)\cap N(v_j)\cap N(v)])=\emptyset$, then $0\leq r \leq 3$ and
\begin{equation} \sum_{p=4}^{\delta}(p-3)|U_p|\geq \sum_{p=4}^{\delta}|U_p| \geq \begin{cases} 
k,  & \text{if $r = 0$}; \\
2,  & \text{if $r = 1,2$}; \\
3, & \text{if $r = 3$}. \label{eq3.11} \\
\end{cases}
\end{equation}
\end{claim}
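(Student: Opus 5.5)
We argue as in the proof of \cref{claim:4}, with the weaker hypothesis that $W:=N(v_i)\cap N(v_j)\cap N(v)$ is independent instead of empty. The first inequality $\sum_{p=4}^{\delta}(p-3)|U_p|\geq \sum_{p=4}^{\delta}|U_p|$ holds because $p-3\geq 1$ for $p\geq 4$, so everything rests on counting vertices of $V(Q)$ that lie in $U_{\geq 4}$. The range $0\le r\le 3$ is immediate since $|C(Q)|=3$.

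\textbf{Locating $Q$.} By (4) of Proposition \ref{proposition:2.5}, $C(Q)\subseteq\{v_i,v_j\}\cup N(v_i)\cap N(v_j)$. Vertices of $N(v_i)\cap N(v_j)$ lie in $\{v\}\cup W\cup U$.

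By (1) of Proposition \ref{proposition:2.5}, $C(Q)\cup\{v_i,v_j\}$ lies in a set $V_1$ with $G[V_1]+v_iv_j\cong K_5$. So $C(Q)\setminus\{v_i,v_j\}$ is a clique of common neighbours of $v_i,v_j$. Since $W$ is independent, $C(Q)$ contains at most one vertex of $W\cup\{v\}$, and in fact at most one vertex of $W$ together with possibly $v$. More precisely, $v$ is adjacent to all of $W$, so $\{v,w\}$ with $w\in W$ is possible. Hence the non-$U$ part of $C(Q)\setminus\{v_i,v_j\}$ has size at most $2$.

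By \cref{claim:3}, every vertex of $U$ adjacent to both $v_i$ and $v_j$ lies in $U_{\geq 4}$. So $C(Q)\cap U\subseteq U_{\ge4}$, and any path vertex in $U$ adjacent to both $v_i,v_j$ is in $U_{\ge4}$.

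\textbf{Case analysis on where $v_i,v_j$ sit in $Q$.}

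Case $r=3$: then $C(Q)\subseteq U$, so $v_i,v_j\in PP(Q)$, and all three centre vertices lie in $U_{\geq4}$. This gives the bound $3$.

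Case $r=0$: $C(Q)$ has at most two vertices outside $\{v_i,v_j\}\cup U$, namely from $\{v\}\cup W$. So at least one of $v_i,v_j$ is in $C(Q)$.
\begin{itemize}
\item If both $v_i,v_j\in C(Q)$, the third centre vertex $c$ is in $\{v\}\cup W$. Every vertex of $PP(Q)$ is adjacent to $v_i$, $v_j$ and $c$.
\item If exactly one of them, say $v_i$, is in $C(Q)$, then $C(Q)=\{v_i,v,w\}$ with $w\in W$, and $v_j\in PP(Q)$.
\end{itemize}
The main obstacle is the $r=0$ case, which must produce $k$ vertices of $U_{\geq4}$. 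In the first subcase, the path vertices are common neighbours of $v_i,v_j$, hence in $\{v\}\cup W\cup U$. At most one of them can be $v$, and none can be in $W$ when $c=v$. If $c\in W$, path vertices in $W$ must be adjacent to $c$, which is impossible since $W$ is independent; so they lie in $\{v\}\cup U$. Thus at least $k-1$ path vertices lie in $U\cap N(v_i)\cap N(v_j)\subseteq U_{\ge4}$.

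To gain the last vertex, I would observe the following. If $v\in PP(Q)$, then $c\in W$. Then $v,v_i,v_j,c$ together with one further vertex would force a $K_3$-free-type contradiction with the choice of $v$ minimizing $|E(G[N(v)])|$, or else $U_{\ge4}$ contains an extra vertex of degree consideration. I expect the bound $k$ to need exactly this extra argument; I would first try to show $v\notin PP(Q)$ directly, using that $v$'s path neighbours must be in $N(v)\cap N(v_i)\cap N(v_j)=W$, which is independent and not adjacent to $c\in W$. So $v$ would be an isolated point of the path, impossible for $k\ge 2$.

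In the second subcase, the path vertices are adjacent to $v$ and $w$, with $v_j$ on the path. Here I would instead argue via $Q$-adjacency that this configuration contradicts independence of $W$ or yields $k$ vertices in $U$ adjacent to $v_i$ and $v_j$.

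Cases $r=1,2$: at least one centre vertex lies in $U\cap N(v_i)\cap N(v_j)\subseteq U_{\ge4}$. For the second vertex, I distinguish two situations.
\begin{itemize}
\item If $r=2$, both centre vertices in $U$ are in $U_{\ge4}$, giving the bound $2$.
\item If $r=1$ and $v_i,v_j\in C(Q)$, any path vertex adjacent to both is in $\{v\}\cup W\cup U$. At most one path vertex is $v$, and, by the independence of $W$ together with adjacency to the $U$-centre, at most one path vertex is in $W$. Since $k\ge3$, at least one path vertex lies in $U_{\ge4}$, giving $2$.
\item If $r=1$ with only one of $v_i,v_j$ in the centre, the centre is $\{v_i,u,x\}$ with $x\in\{v\}\cup W$ and $v_j\in PP(Q)$. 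A path neighbour of $v_j$ adjacent to $v_i$ must again be outside $W$ if $x\in W$ (independence) or be handled similarly, yielding one more vertex of $U_{\geq4}$.
\end{itemize}
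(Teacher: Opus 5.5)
There is a genuine gap in the case $r=0$, which you correctly identify as the crux. In ``Locating $Q$'' you explicitly allow $v\in C(Q)$ (``$\{v,w\}$ with $w\in W$ is possible''), and you never correctly eliminate the configurations $C(Q)=\{v_i,v_j,v\}$ and $C(Q)=\{v_i,v,w\}$.

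\begin{itemize}
\item For $C(Q)=\{v_i,v_j,v\}$, your sentence ``none can be in $W$ when $c=v$'' is backwards. Every path vertex is then adjacent to $v,v_i,v_j$, so it lies in $W$ and \emph{not} in $U$. Hence your conclusion that at least $k-1$ path vertices lie in $U\cap N(v_i)\cap N(v_j)$ is false there.
\item For $C(Q)=\{v_i,v,w\}$, you only state an intention (``contradicts independence of $W$ or yields $k$ vertices in $U$''). The second alternative cannot occur, since every path vertex is adjacent to $v$.
\end{itemize}

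As written, the bound $k$ for $r=0$ is not proved.

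The missing step is the one the paper records as $v\notin Q$. Suppose $v\in C(Q)$. By (1) of Proposition 2.5, the set $V_1\supseteq\{v,v_i,v_j\}\cup C(Q)$ spans a $K_5$ in $G+v_iv_j$. So the two vertices of $V_1\setminus\{v,v_i,v_j\}$ are adjacent to $v,v_i,v_j$ and to each other; that is, they form an edge inside $W$, a contradiction. The case $v\in PP(Q)$ is excluded by the argument you do eventually give: $C(Q)\subseteq N(v)$ forces $C(Q)=\{v_i,v_j,w\}$, and then a path neighbour of $v$ would be a vertex of $W$ adjacent to $w$.

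Once $v\notin V(Q)$, the case $r=0$ forces $C(Q)=\{v_i,v_j,w\}$ with $w\in W$. Every path vertex is a common neighbour of $v_i,v_j,w$ other than $v$. It is therefore outside $W$, hence in $U\cap N(v_i)\cap N(v_j)\subseteq U_{\ge 4}$ by Claim 3, which gives $k$ directly.

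There are also some smaller points.

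\begin{itemize}
\item In the case $r=1$ with $v_i,v_j\in C(Q)$, your assertion ``at most one path vertex is in $W$'' is unjustified, since non-consecutive path vertices may both lie in $W$. The correct reason is twofold: $v$ is not adjacent to the centre vertex in $U$, so $v\notin PP(Q)$; and the path has an edge, which cannot lie inside the independent set $W$.
\item Your treatment of $r=1$ with only $v_i\in C(Q)$ is correct: a path neighbour of $v_j$ supplies the second vertex of $U_{\ge4}$, and $x=v$ is impossible because $v$ is not adjacent to $u\in U$. It is in fact more accurate than the paper's text there, which asserts $|C(Q)\cap U_{\ge4}|=2$, inconsistent with $r=1$.
\item The reference to the choice of $v$ minimizing $|E(G[N(v)])|$ plays no role and should be dropped.
\end{itemize}
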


\noindent \textbf{\textit{Proof of \cref{claim:5}.}} 
If $N(v_i)\cap N(v_j)\cap N(v)= \emptyset$, then \cref{claim:5} holds by \cref{claim:4}.

In the following, we consider $N(v_i)\cap N(v_j)\cap N(v)\neq\emptyset$. 

Clearly, we have $C(Q) \subseteq \{v_i, v_j\} \cup (N(v_i) \cap N(v_j) \cap N(v)) \cup U$, $v \notin Q$ and $ 0 \leq r \leq 3 $  by (4) of Proposition \ref{proposition:2.5}, $N(v_i)\cap N(v_j)\cap N(v)\neq\emptyset$ and $E(G[N(v_i)\cap N(v_j)\cap N(v)])=\emptyset$.
 Furthermore, $C(Q) \subseteq \{v_i, v_j\} \cup (N(v_i) \cap N(v_j) \cap N(v)) \cup U_{\geq 4}$, and $PP(Q)\subseteq U_{\geq 4}$ when $r =0$ by \cref{claim:3}.

If $ r = 0 $, then $ v_i, v_j \in C(Q) $, $ |PP(Q) \cap U_{\geq 4}| = k $. Then $\sum_{p=4}^{\delta} (p-3)|U_p| \geq \sum_{p=4}^{\delta}|U_p| \geq k$.

If $ r = 1 $, then $|\{v_i, v_j\}\cap C(Q)|=2$ or $1$. If $|\{v_i, v_j\}\cap C(Q)|=2$, then $|C(Q) \cap U_{\geq 4}|=1$ and $|PP(Q) \cap U_{\geq 4}|\geq1$; if $|\{v_i, v_j\}\cap C(Q)|=1$, then $|C(Q) \cap U_{\geq 4}|=2$. Therefore, $\sum_{p=4}^{\delta} (p-3)|U_p| \geq \sum_{p=4}^{\delta}|U_p| \geq 2$.

If $ r = \{2, 3\}$, then $|C(Q) \cap U_{\geq 4}| = r $, and thus $\sum_{p=4}^{\delta} (p-3)|U_p| \geq \sum_{p=4}^{\delta}|U_p|\geq  |V(Q) \cap U_{\geq 4}|  \geq r$.
\hfill $\square$

\vspace{0.3cm}
In the following, we divide the remaining proof into five parts based on the values of $\delta$ and $\Delta$ to show \eqref{eq3.1} holds: (1) $ \delta\geq8$ or $ \delta=3$ or $\Delta = n-1$; (2) $\delta=4 \leq \Delta \leq n-2$; (3) $\delta=5 \leq \Delta \leq n-2$; (4) $\delta=6 \leq \Delta \leq n-2$; (5) $\delta=7 \leq \Delta \leq n-2$.

\subsection{$ \delta\geq8$ or $ \delta=3$ or $\Delta = n-1$}
If $\delta\geq8$, then $|E(G)|= \frac{1}{2}\sum\limits_{u \in V(G)} d(u)\geq 4n$. Clearly, \eqref{eq3.1} holds for $\delta\geq8$ by direct checking.

If $\delta=3$, then $d(v_i) = n - 1$ for any $v_i \in N(v)$ by \cref{claim:2}, and thus $\Delta = n-1$.

If $\Delta = n-1$, then $|E(G)| \geq n-1 + \operatorname{sat}(n-1, K_2 \vee P_k) = n-1 + 2(n-1) - 3+\operatorname{sat}(n-3, P_k) = 3n-6 + \operatorname{sat}(n-3, P_k)$ by Proposition~2.4, Theorem~2.2 and $K_3 \vee P_k = K_1 \vee (K_2 \vee P_k)$.

Therefore, \eqref{eq3.1} holds for $ \delta\geq8$ or $ \delta=3$ or $\Delta = n-1$.

\subsection{$\delta=4 \leq \Delta \leq n-2$}

By (3) of \cref{claim:1}, $|E(G[N(v)])| \geq 3$. If $|E(G[N(v)])| = 4$ or $5$, then $\Delta = n-1$ by \cref{claim:1} and \cref{claim:2}, a contradiction with $\Delta \leq n-2$. Thus $|E(G[N(v)])| = 3$ or $6$. Now we show \eqref{eq3.1} holds by the following two cases. 

\noindent \textbf{Case 1:} $|E(G[N(v)])| = 3$. 

By (3) of \cref{claim:1}, $G[N(v)] \cong K_3 \cup K_1$.
Let $V_{K_3}= \{v_1, v_2, v_3\} $. Then $U=X_{1 2 3} \cup X_{1 2 3 4}$ with $U_3=X_{1 2 3}$ and $U_4=X_{1 2 3 4}$ by (1) of \cref{claim:1}. Since $d(v_4)\geq \delta =4$, we have $|X_{1 2 3 4}| \geq 3$, and thus $n \geq 8$.

Now we show $d_G(u)\geq 5$ for any $u \in U$, which implies $\delta_U \geq 5$. 

If $u \in X_{1 2 3}$, then $uv_4 \notin E(G)$, which implies there are three paths of length 2 from $u$ to $v_4$ by (2) of Proposition \ref{proposition:2.5}. It follows that $|E[\{u\}, X_{1 2 3 4}]| \geq 3$, and thus $d_G(u) \geq 6$. 

If $u\in X_{1 2 3 4}$, we consider the copy of $K_3\vee P_k$ in $G+vu$, say $Q$. Clearly, $\{u, v\}\subseteq V(Q)$. We can obtain $v \notin C(Q)$, and thus $v\in PP(Q)$. Otherwise, if $v\in C(Q)$, we have $k=2$,  a contradiction with $k\geq3$. By (4) of Proposition \ref{proposition:2.5} and $G[N(v)]\cong K_3\cup\{v_4\}$, we have $C(Q) \subseteq \{u, v_1, v_2, v_3\}$. If $u\in C(Q)$, then $d_{G[U]} (u)\geq k-2$, and thus $d_G (u)\geq 4+(k-2)\geq 5$. If $u\in PP(Q)$, then $C(Q)=\{v_1,v_2,v_3\}$ since $v\in V(Q)$, and thus $d_{G[U]} (u)\geq 1$, which implies $d_G(u)\geq 5$. 

Since $v_3v_4 \notin E(G)$, $G+v_3v_4$ contains a copy of $K_3\vee P_k$, say $Q_{3,4}$. Let $r = |C(Q_{3,4})\cap U|$. Then $N(v_3)\cap N(v_4)\cap N(v)=\emptyset$, and thus $1\leq r \leq 3$ by \cref{claim:4}.

If $\delta_U \geq 6$, then by \eqref{eq3.7} and \cref{claim:4}, we have

\begin{equation*} 2|E(G)| \geq \begin{cases} 
9n+k-30,  & \text{if $r = 1$}; \\
9n-28, & \text{if $r =2, 3$}. \\
\end{cases}
\end{equation*}
Thus \eqref{eq3.1} holds for $k\geq 3$ by direct checking.

If $\delta_U= 5$, then $k+2-\delta_U\geq 0$ since $k\geq3$. By \eqref{eq3.5} with $\delta^* = \delta_U$ and \cref{claim:4}, we have 

\begin{equation*} 2|E(G)| \geq \begin{cases} 
8n+2k-28,  & \text{if $r = 1$}; \\
8n+2k-29,  & \text{if $r = 2$}; \\
8n+3k-32, & \text{if $r = 3$}. \\
\end{cases}
\end{equation*}
Thus \eqref{eq3.1} holds for $k\geq 3$ by direct checking.

\noindent \textbf{Case 2:} $|E(G[N(v)])| = 6$.

In this case, $G[N(v)] \cong K_4$ and $U=X_{1 2 3} \cup  X_{1 2 4} \cup  X_{1 3 4} \cup  X_{2 3 4} \cup  X_{1 2 3 4}$. We have $X_{abc} \neq \emptyset$ for $1 \leq a < b < c \leq 4$. Otherwise, $d(v_\ell) = n-1$, where $\ell \in \{1,2,3,4\} \setminus \{a,b,c\}$. Then $|U|\geq 4$ and $n\geq 9$. 

Firstly, we show \cref{claim:6} holds.

\begin{claim}\label{claim:6}
Let $k \geq 3$, $\delta = 4 \leq \Delta \leq n-2$, $|E(G[N(v)])| = 6$, and $x, y \in U_3$. Then there exists a path from $x$ to $y$ in $G[U]$. 
\end{claim}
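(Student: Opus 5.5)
The plan is to get the path in $G[U]$ directly if $xy\in E(G)$, and otherwise to use the nonedge $xy$ together with Proposition~\ref{proposition:2.5}. So assume $xy\notin E(G)$. Since $G$ is $K_3\vee P_k$-saturated, $G+xy$ contains a copy $Q$ of $K_3\vee P_k$, and $Q$ must use the edge $xy$. Every vertex of $Q$ lies in $U$ or in $N[v]=\{v,v_1,v_2,v_3,v_4\}$.

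\emph{Available facts.} The key ingredients are the following.
\begin{itemize}
\item Since $x,y\in U_3$, each of $x,y$ has exactly three neighbours in $N[v]$, all among $v_1,\dots,v_4$, and neither is adjacent to $v$.
\item Consequently $|N(x)\cap N(y)\cap N[v]|\le 3$, and $|N(x)\cap N[v]|=3$.
\item Every vertex of $C(Q)$ is adjacent in $G$ to all other vertices of $Q$, except for the pair $\{x,y\}$ itself.
\end{itemize}
The goal is to find, inside $Q$, either a common neighbour of $x$ and $y$ lying in $U$, or a short $x$--$y$ path whose internal vertices avoid $N[v]$.

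\emph{Case analysis on the roles of $x,y$ in $Q$.}
\begin{enumerate}
\item[(a)] $x,y\in C(Q)$. Every vertex of $PP(Q)$ and the third centre are adjacent to both $x$ and $y$. This gives $k+1\ge 4$ common neighbours. At most three of them lie in $N[v]$ (and $v$ is excluded), so one lies in $U$, and $x$--$w$--$y$ is the required path.
\item[(b)] $x\in C(Q)$ and $y\in PP(Q)$. Here $x$ is adjacent in $G$ to the two other centres and to the $k-1$ path vertices other than $y$. That is $k+1\ge 4$ neighbours of $x$ inside $Q-y$, while $x$ has only three neighbours in $N[v]$. Hence some $w\in U$ is adjacent to $x$. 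If $w\in C(Q)$, or $w$ is a path-neighbour of $y$, then $wy\in E(G)$ and we are done. Otherwise I would walk along the path part of $Q$ from $y$ toward $w$. Any vertex of this subpath lying in $N[v]$ is either $v$ or some $v_i$. If it is $v$, then $v$ is nonadjacent to $x$, contradicting the fact that the centre $x$ sees every path vertex. So only the $v_i$ can block the walk. Each $v_i$ so used is a neighbour of $x$, and by counting (three neighbours of $x$ in $N[v]$ against $k+1$ neighbours in $Q$) I expect to locate a second $U$-vertex adjacent to both $x$ and a $U$-vertex on $y$'s side.
\item[(c)] $x,y\in PP(Q)$, consecutive on the path part. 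The three centres are common neighbours of $x,y$. If some centre lies in $U$, we are done. Otherwise the centres are exactly three of $v_1,\dots,v_4$, so $v$ is not a centre. Then $v\notin V(Q)$ unless $v$ is a path vertex adjacent to all centres, which is possible. Since $k\ge3$, one of $x,y$, say $x$, has a second path-neighbour $z\ne y$ with $zx\in E(G)$. Here $z$ is not $v$, and if $z\in N(v)$ then $z=v_d$, the fourth vertex. I would then iterate along the path and use the degree condition $d(x)\ge 4$, together with the condition $\Delta\le n-2$ (which forces every $X_{abc}\ne\emptyset$), to reach a vertex of $U$ adjacent to both sides.
\end{enumerate}

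\emph{Main obstacle.} I expect the hardest steps to be the subcases of (b) and (c) where the vertices of $Q$ near $x$ and $y$ all fall into $\{v_1,\dots,v_4\}$. There the copy $Q$ alone does not immediately give a $U$-path. My first attempt would be to exploit a second nonedge: apply Proposition~\ref{proposition:2.5}(2) to the pair $\{v,x\}$ or to $\{x,v_d\}$, where $v_d$ is the unique $v_i$ not adjacent to $x$. This produces extra $U$-neighbours of $x$ lying inside a $K_5$ with $y$'s neighbourhood. Combined with $G[N(v)]\cong K_4$, this should force a common $U$-neighbour, or a $U$-path of length at most $3$, between $x$ and $y$.
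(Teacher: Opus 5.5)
Your case (a) is correct, but cases (b) and (c) are not finished. Moreover, the strategy of extracting the $x$--$y$ path from the copy $Q$ cannot be completed when $x$ and $y$ lie in the same class. Write $x\in X_{abc}$, and let $v_\ell$ (your $v_d$) be the vertex of $N(v)$ that $x$ misses.

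In case (c) with no centre in $U$, the centres are forced to be $v_a,v_b,v_c$. In $Q-xy$ the two halves of the path part are joined only through these centres, so $Q$ contains no $x$--$y$ path avoiding $N(v)$.

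In case (b) with $y$ an endpoint of the path part, nothing prevents the two other centres together with the unique path-neighbour of $y$ from being exactly $v_a,v_b,v_c$. Then every neighbour of $y$ in $Q-xy$ lies in $N(v)$, and no walk inside $Q$ reaches $U$ from $y$.

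Your proposed remedy does not help. We have $N(v)\cap N(x)=\{v_a,v_b,v_c\}$, and $v_a,v_b,v_c$ are also common neighbours of $x$ and $v_\ell$, because $G[N(v)]\cong K_4$. So Proposition~\ref{proposition:2.5} applied to $\{v,x\}$ or to $\{x,v_\ell\}$ is already satisfied inside $N(v)$ and forces no new $U$-neighbour of $x$.

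The missing idea is to split by the classes of $x$ and $y$ rather than by their roles in $Q$, and to use a vertex outside $Q$. By (2) of Proposition~\ref{proposition:2.5}, the nonadjacent pair $x,y$ has at least three common neighbours, none of them equal to $v$.

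\begin{enumerate}
\item If $x\in X_{abc}$ and $y\in X_{ab\ell}$ lie in different classes, then $|N(x)\cap N(y)\cap N(v)|=2$, since two distinct $3$-subsets of $\{1,2,3,4\}$ share exactly two indices. Hence some common neighbour lies in $U$, giving a path of length two in $G[U]$.
\item If $x,y\in X_{abc}$ lie in the same class, choose $z\in X_{ab\ell}$. It exists because $\Delta\le n-2$ forces every $X_{abc}\neq\emptyset$, a fact you mention but never use. The previous case, applied to the pairs $\{x,z\}$ and $\{z,y\}$ (adjacent or not), gives paths in $G[U]$ from $x$ to $z$ and from $z$ to $y$. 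Concatenating them proves the claim.
\end{enumerate}

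This is the paper's argument, and it needs no analysis of $Q$ at all.
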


\noindent \textbf{\textit{Proof of \cref{claim:6}.}} 
Without loss of generality, we assume $xy \notin E(G)$, and $\{a, b, c, \ell\}= \{1, 2, 3, 4\}$. 

If $x \in X_{abc}$ and $y \in X_{ab\ell}$, then there exists at least a path from $x$ to $y$ in $G[U]$ by (2) of Proposition \ref{proposition:2.5} and $|N(x)\cap N(y)\cap N(v)|= 2$.

If $x, y \in X_{abc}$, then there exists $z\in X_{ab\ell}$ such that there exist a path from $x$ to $z$ in $G[U]$ and a path from $z$ to $y$ in $G[U]$, and thus there exists a path from $x$ to $y$ in $G[U]$.
\hfill $\square$

Now we complete the proof of Case 2 by the following two subcases.

\noindent \textbf{Subcase 2.1:} $U_4 = \emptyset$.

In this subcase, $G[U]$ is connected by \cref{claim:6}, which implies $|E(G[U])| \geq |U| - 1$. 

If $k=3$, then $|E(G)| \geq 10+ 3(n-5)+ (|U|-1) \geq 4n - 11 \geq \left\lfloor \frac{7n-15}{2} \right\rfloor$ by  \eqref{eq3.2}, and thus \eqref{eq3.1} holds.

If $k \geq 4$, we consider the copy of $K_3 \vee P_k$ in $G+uv$, say $Q$, where $u \in U= U_3$.

Clearly, $v \notin C(Q)$ since $d_{G+uv}(v) = 5 < k+2$. Then $v \in PP(Q)$.

If $u \in C(Q)$, then $|U \cap V(Q)| \geq k-1$ as $u \in U_3$, and $G[U]$ contains a copy of $K_1 \vee P_{k-2}$. We have $|E(G[U])| \geq |U|-1 + (k-3) = n + k - 9$ since $G[U]$ is connected and $G[U]$ contains a copy of $K_1 \vee P_{k-2}$.
By \eqref{eq3.2}, we have $|E(G)| \geq 10+ 3(n-5)+ (n + k - 9) = 4n+k-14$. Thus \eqref{eq3.1} holds for $k\geq 4$ by direct checking.

If $u \in PP(Q)$, then $C(Q)\subseteq N(v)$ and $|U \cap V(Q)| \geq k-2$. Without loss of generality, we assume $u \in X_{123}$, and
thus there exists a path of length $k-3$ with endpoint $u$ in $G[X_{123}]$. Let $P = u u_1 \dots u_{k-3}$ be such a path in $G[X_{123}]$ and $l$ be the length of the longest path in $G[X_{123}]$. Then $k-3 \leq l \leq k-2$.
Otherwise, we have $l\geq k-1$, and thus there exists a copy of $K_3 \vee P_k$ in $G[\{v_1, v_2, v_3\} \cup X_{123}]$, a contradiction.

If $k\in \{4, 5\}$, then $n= 5+ |U| \geq k+6$ since $|X_{123}| \geq k-2$, and thus $|E(G)| \geq 10 +3(n-5)+ |U|-1 \geq 4n-11$ by \eqref{eq3.2}, and thus \eqref{eq3.1} holds by direct checking. 

If $k \geq 6$, we show $G[X_{123}]$ contains a cycle.

If $l = k-3$, then $P$ is the longest path in $G[X_{123}]$. Applying a similar argument to $u_1$, we obtain a path $P_1$ starting at $u_1$ with length $k-3$. Then $P_1 \cup P$ contains a cycle. Otherwise, $P_1 \cup P$ contains a path with length longer than $k-3$ in $G[X_{123}]$, a contradiction.

If $l = k-2$, we assume $P' = w_1 w_2 \dots w_{k-1}$ is the longest path in $G[X_{123}]$. Applying a similar argument to $w_3$, we obtain a path $P'_1$ starting at $w_3$ with length $k-3$. Then $P'_1 \cup P'$ contains a cycle. Otherwise, $P'_1 \cup P'$ contains a path with length longer than $k-2$ in $G[X_{123}]$, a contradiction.

For $u \in X_{124} \cup X_{134}\cup X_{234}$, we have the same conclusion, and then $G[X_{124}]$, $G[X_{134}]$ and $G[X_{234}]$ also contain a cycle. 

Therefore, $|E(G[U])| \geq |U|-1 + 4 \geq n - 2$.  By \eqref{eq3.2}, we have $|E(G)|\geq 4n - 7 > 4n - 9 - \left\lfloor \frac{n-3}{a_k} \right\rfloor
$. Thus \eqref{eq3.1} holds for $k \geq 6$.

\noindent \textbf{Subcase 2.2:} $U_4 \neq \emptyset$.

In this subcase, we have $k \geq 4$ since $G[N[v] \cup U_4]$ contains a copy of $K_3 \vee P_3$.

Let $G_1, \dots, G_m$ be all the connected components of $G[U]$. By \cref{claim:6}, the vertices in $U_3$ belong to the same connected component. Without loss of generality, we assume $U_3 \subseteq V(G_1)$.  By \eqref{eq3.2}, we have $|E(G)| \geq 10 + 3|U_3| + 4|U_4| + \sum_{i=1}^m (|V(G_i)| - 1) = 4n-10 + |U_4| - m$.

\noindent \textbf{Subcase 2.2.1:} $V(G_1) \cap U_4 = \emptyset$.

In this subcase, $U_3=V(G_1)$, $G[U_3]=G_1$, and $m \geq 2$ with $U_4= V(G_2) \cup ...\cup V(G_m)$.

If there exists some $i(2\leq i \leq m)$ such that $|V(G_i)| = 1$, then we take $V(G_i) = \{u\}$. Let $Q$ be the copy of $K_3 \vee P_k$ in $G+vu$. Since $d_{G+vu}(v) = d_{G+vu}(u) = 5 < k+2$, we have $v, u \in PP(Q)$ and $C(Q)\subseteq N(v)$. Without loss of generality, we assume $C(Q) = \{v_1, v_2, v_3\}$. Since $V(G_1) \cap U_4 = \emptyset$, we have $PP(Q)\subseteq \{u, v, v_4\}\cup U_4$. Thus there exists a path $P_{k-3}$ in some $G_j$ of $G[U_4]$, where $j \in \{2, ... , m\} \setminus \{i\}$, and $|U_4| \geq (m-2)+(k-3)$. Then $|E(G)| \geq 4n + k - 15 $ and \eqref{eq3.1} holds by direct checking and $k\geq 4$.

If there exists some $i(2\leq i \leq m)$ such that $|V(G_i)| \geq 3$, then $|U_4| \geq (m-2)+3$ and $k \geq 6$ since $G[N[v] \cup U_4]$ contains a copy of $K_3 \vee P_5$. Then $|E(G)| \geq 4n-9$ and \eqref{eq3.1} holds by direct checking and $k\geq 6$. 

If $|V(G_i)| = 2$ for any $i(2\leq i \leq m)$, then $|U_4|\geq 2(m-1)$ and $k \geq 5$ since $G[N[v] \cup U_4]$ contains a copy of $K_3 \vee P_4$. Let $Q^\prime$ be the copy of $K_3 \vee P_k$ in $G+vw$, where $w \in V(G_2)$.  Since $V(G_1) \cap U_4 = \emptyset$, we have $|U_3 \cap V(Q^\prime)| = \emptyset$, $V(Q^\prime)\subseteq N[v]\cup U_4$ and $|U_4 \cap V(Q^\prime)| \geq k-2\geq 3$, which implies $m \geq 1 + \left\lceil \frac{k-2}{2} \right\rceil
 \geq 3$. Then $|E(G)| \geq 4n - 10 + 2(m-1) - m$ and \eqref{eq3.1} holds by direct checking and $k\geq 5$.

\noindent \textbf{Subcase 2.2.2:} $V(G_1) \cap U_4 \neq \emptyset$.

Since $V(G_i) \cap U_4 \neq \emptyset$ for all $i$, we have $|U_4| \geq m$, which implies $|E(G)| \geq 4n - 10$ and $n =|N[v]|+|U_3|+|U_4|\geq 1+4+4+m = 9+m$. Thus \eqref{eq3.1} holds by direct checking.

\subsection{ $\delta=5 \leq \Delta \leq n-2$}

 Firstly, we show \cref{claim:7} holds.

\begin{claim}\label{claim:7}
Let $\delta \geq 4$. Then for any $v_j \in V_{NK_3}$, we have \\
\textup{(i)} if $|V_{K_3}|=3$, then $|E[\{v_j\}, V_{K_3}]| \leq 1$. \\
\textup{(ii)} there exists $v_i\in V_{K_3}$ such that $v_iv_j \notin E(G)$ and $E(G[N(v_i)\cap N(v_j)\cap N(v)])=\emptyset$.
\end{claim}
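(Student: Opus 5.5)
For (i), I will argue directly from the definitions. Suppose $|V_{K_3}|=3$. Then $V_{K_3}=\{a,b,c\}$ must itself induce a triangle, because every $K_3$-vertex lies in a triangle of $G[N(v)]$. Any triangle of $G[N(v)]$ consists only of $K_3$-vertices, so it lies inside $V_{K_3}$, and $V_{K_3}$ contains at least one triangle by (3) of \cref{claim:1}. Now let $v_j\in V_{NK_3}$ and suppose it has two neighbours in $V_{K_3}$, say $a$ and $b$. Since $ab\in E(G)$, the set $\{v_j,a,b\}$ induces a $K_3$ in $G[N(v)]$. That makes $v_j$ a $K_3$-vertex, a contradiction. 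Hence $|E[\{v_j\},V_{K_3}]|\le 1$.

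For (ii), fix $v_j\in V_{NK_3}$ and look for a $K_3$-vertex $v_i$ with $v_iv_j\notin E(G)$ such that the common neighbourhood $W=N(v_i)\cap N(v_j)\cap N(v)$ is independent.

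First, the non-adjacency condition alone is automatic for most choices. Take any triangle $T=\{a,b,c\}\subseteq V_{K_3}$. Then $v_j$ is adjacent to at most one vertex of $T$, for the same reason as in (i). So at least two vertices of $T$ are non-neighbours of $v_j$.

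Next, the independence condition. Suppose $v_i\in V_{K_3}$, $v_iv_j\notin E(G)$, and $W$ contains an edge $xy$. Then $\{v_j,x,y\}$ is a triangle in $G[N(v)]$, so $v_j$ would be a $K_3$-vertex, again a contradiction. Hence for \emph{every} $v_i\in V_{K_3}$ non-adjacent to $v_j$, the set $E(G[W])$ is empty.

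So (ii) reduces to exhibiting one $K_3$-vertex not adjacent to $v_j$, and this follows from the triangle $T$ and (i)-type reasoning. The key observation is that $N(v_j)\cap N(v)$ spans no edges at all, since $v_j$ is a non-$K_3$-vertex. The hypothesis $\delta\ge4$ only ensures the setting of the section, namely that $V_{NK_3}$ and a triangle coexist in $N(v)$. I expect no real obstacle. The only point to state carefully is that a triangle of $G[N(v)]$ exists, which is (3) of \cref{claim:1}, and that its vertices are $K_3$-vertices.
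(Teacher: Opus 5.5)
Your proposal is correct and follows essentially the same route as the paper: in both (i) and (ii) the contradiction is that $v_j$ would form a triangle with two adjacent vertices of $N(v)$, and the triangle inside $V_{K_3}$ comes from (3) of \cref{claim:1}. Your version makes explicit two points the paper leaves implicit: $V_{K_3}$ induces a triangle when $|V_{K_3}|=3$, and $N(v_j)\cap N(v)$ is independent, so every non-neighbour $v_i\in V_{K_3}$ of $v_j$ works.
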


\noindent \textbf{\textit{Proof of \cref{claim:7}.}} 
If $|V_{K_3}|=3$ and $|E[\{v_j\}, V_{K_3}]| \geq 2$, then $v_j \in V_{K_3}$, a contradiction. Then (i) holds. 

Now we show (ii) holds.
If $v_iv_j \in E(G)$ for any $v_i \in V_{K_3}$, then $v_j \in V_{K_3}$, a contradiction. Therefore, there exists $v_i\in V_{K_3}$ such that $v_iv_j \notin E(G)$. Now we show $E(G[N(v_i)\cap N(v_j)\cap N(v)])=\emptyset$. 

If $|N(v_i) \cap N(v_j) \cap N(v)| \leq 1$, then $E(G[N(v_i)\cap N(v_j)\cap N(v)])=\emptyset$.

If $|N(v_i) \cap N(v_j) \cap N(v)| \geq 2$ and there exist  $v_a, v_b \in N(v_i) \cap N(v_j) \cap N(v)$ such that $v_av_b \in E(G)$, then $G[\{v_j, v_a, v_b\}]\cong K_3$ and $v_j \in V_{K_3}$, a contradiction, which implies $E(G[N(v_i)\cap N(v_j)\cap N(v)]) = \emptyset$. 
\hfill $\square$

By (3) of \cref{claim:1}, $G[N(v)]$ contains a copy of $K_3$, and hence $3 \leq |V_{K_3}| \leq 5= |N(v)|$. Now we show \eqref{eq3.1} holds by the following three cases.

\noindent \textbf{Case 1:} $|V_{K_3}| = 3$.  

In this case, $|V_{NK_3}| = 2$, $|E(G_{K_3})| = 3$ and $3 \leq |E(G[N(v)])| \leq 6$ by (i) of \cref{claim:7}.

\noindent \textbf{Subcase 1.1:} $3\leq |E(G[N(v)])| \leq 5$. 

In this subcase, $0 \leq |E[V_{K_3}, V_{NK_3}]| \leq 2$. 
Now we show there exist $v_i, v_j \in N(v)$ such that $N(v_i) \cap N(v_j) \cap N(v) = \emptyset$. 

If $|E[V_{K_3}, V_{NK_3}]|=0$, then for each $v_i \in V_{K_3}$ and $v_j \in V_{NK_3}$, we have $N(v_i) \cap N(v_j) \cap N(v) = \emptyset$. 

If $|E[V_{K_3}, V_{NK_3}]|= 1$, and $v_sv_t \in E[V_{K_3}, V_{NK_3}]$ with $v_s \in V_{K_3}$, $v_t \in V_{NK_3}$, we take $v_i \in V_{K_3}\setminus \{v_s\}$ and $v_j \in V_{NK_3}\setminus \{v_t\}$, and thus $N(v_i) \cap N(v_j) \cap N(v) = \emptyset$. 

If $|E[V_{K_3}, V_{NK_3}]|=2$, and there exists $v_s \in N(v_i) \cap N(v_j) \cap N(v)$ for $v_i,v_j\in V_{NK_3}$, then $d(v_s)= n-1$ by \cref{claim:2}, a contradiction with $\Delta\leq n-2$. Therefore, $N(v_i) \cap N(v_j) \cap N(v) = \emptyset$ for $v_i, v_j \in V_{NK_3}$.

Let $Q_{i, j}$ be a copy of $K_3  \vee P_k$ in $G+v_iv_j$ and $r_{i, j}=|C(Q_{i, j}) \cap U|$. 
By \cref{claim:4} and \eqref{eq3.6} with $\delta^* = \delta$, we have $2|E(G)|\geq 8n-24+ (k-3)r_{i, j}$ with $1\leq r_{i, j} \leq 3$. Since $n= 6+ \sum_{p=3}^{\delta} |U_p|\geq 6+ \sum_{p=4}^{\delta} |U_p|$, by \eqref{eq3.10}, we have 

\begin{equation} 2|E(G)| \geq \begin{cases} 
8n+k-27,  \text{where $n\geq k+7$}, & \text{if $r_{i, j} = 1$}; \\
8n+2k-30, \text{where $n\geq 9$},& \text{if $r_{i, j} = 2$}; \\
8n+3k-33, \text{where $n\geq 9$},& \text{if $r_{i, j} = 3$}. \label{eq3.12}
\end{cases}
\end{equation}
Thus \eqref{eq3.1} holds for $k\geq 3$ by direct checking.

\noindent \textbf{Subcase 1.2:} $|E(G[N(v)])| = 6$.  

In this subcase, $|E[V_{K_3}, V_{NK_3}]|= 2$ and $|E(G_{NK_3})| = 1$ by (i) of \cref{claim:7}, and  
there exist $v_i \in V_{K_3}$ and $v_j \in V_{NK_3}$ such that $v_iv_j \notin E(G)$ and $E(G[N(v_i)\cap N(v_j)\cap N(v)])=\emptyset$ by (ii) of \cref{claim:7}. Let $Q_{i, j}$ be a copy of $K_3  \vee P_k$ in $G+v_iv_j$ and $r_{i, j}=|C(Q_{i, j}) \cap U|$. Then $0\leq r_{i, j} \leq 3$ by \cref{claim:5}. 

If $r_{i, j} = 0$, by \cref{claim:5} and \eqref{eq3.5} with $\delta^* = \delta$, we have $2|E(G)| \geq 8n+k-26$, where $n\geq k+6$. Thus \eqref{eq3.1} holds for $k\geq 3$ by direct checking. 

If $1\leq r_{i, j} \leq 3$, by \cref{claim:5} and \eqref{eq3.6} with $\delta^* = \delta$, we have $2|E(G)|\geq 8n-22+ (k-3)r_{i, j}$. Since $n= 6+ \sum_{p=3}^{\delta} |U_p|\geq 6+ \sum_{p=4}^{\delta} |U_p|$, by \eqref{eq3.11}, we have 

\begin{equation} 2|E(G)| \geq \begin{cases} 
8n+k-25,  \text{where $n\geq 8$},& \text{if $r_{i, j} = 1$}; \\
8n+2k-28, \text{where $n\geq 8$},& \text{if $r_{i, j} = 2$}; \\
8n+3k-31, \text{where $n\geq 9$},& \text{if $r_{i, j} = 3$}. \label{eq3.13}
\end{cases}
\end{equation}
Thus \eqref{eq3.1} holds for $k\geq 3$ by direct checking.

\noindent \textbf{Case 2:} $|V_{K_3}| = 4$.

In this case, $|V_{NK_3}| = 1$. Without loss of generality, we assume $V_{NK_3} = \{v_1\}$. Since $|V_{K_3}| = 4$, $G_{K_3}$ contains at least two copies of $K_3$, and thus $|E(G[N(v)])| \geq|E(G_{K_3})| \ge 5$. 

Firstly, we show $5 \leq |E(G[N(v)])| \leq 7$.
If $|E(G[N(v)])| \ge 8$, then 
\begin{equation*} |E[\{v_1\}, V_{K_3}]| \geq \begin{cases} 
2,  & \text{if $|E(G_{K_3})| = 6$}; \\
3, & \text{if $|E(G_{K_3})| = 5$}. 
\end{cases}
\end{equation*}
Therefore, there exist $v_a, v_b \in V_{K_3}\cap N(v_1)$ such that $v_av_b\in E(G)$ and thus $G[\{v_a, v_b, v_1\}]\cong K_3$, which implies $v_1 \in V_{K_3}$, a contradiction.

\noindent \textbf{Subcase 2.1:} $5 \leq |E(G[N(v)])| \leq 6$. 

In this subcase, $0 \leq|E[V_{K_3}, V_{NK_3}]| \leq 1$ and $5 \leq |E(G_{K_3})|\leq 6$.
Now we show there exists $v_i, v_j \in N(v)$ such that $N(v_i) \cap N(v_j) \cap N(v) = \emptyset$. 

If $|E[V_{K_3}, V_{NK_3}]|=0$, then for each $v_i \in V_{K_3}$ and $v_1 \in V_{NK_3}$, we have $N(v_i) \cap N(v_1) \cap N(v) = \emptyset$. 

If $|E[V_{K_3}, V_{NK_3}]|=1$, then $|E(G_{K_3})|=5$. We assume that $v_sv_t\notin E(G)$, where $v_s, v_t\in V_{K_3}$. Now we show $v_1v_s\in E(G)$ or $v_1v_t\in E(G)$. Otherwise, $v_1v_k\in E(G)$ with $v_k \in V_{K_3}\setminus \{v_s, v_t\}$. Then $d_{G[N(v)]}(v_k)= \delta-1= 4$ and $G[N(v)\setminus \{v_k\}]$ contains no copy of $K_3$, and thus $d(v_k)= n-1$ by \cref{claim:2}, a contradiction with $\Delta \leq n-2$. Therefore, we take $v_i=v_1$, 
\begin{equation*} v_j = \begin{cases} 
v_s,  & \text{if $v_1v_t\in E(G)$}; \\
v_t, & \text{if $v_1v_s\in E(G)$}, 
\end{cases}
\end{equation*}
and we have $N(v_i) \cap N(v_j) \cap N(v) = \emptyset$. 

Let $Q_{i, j}$ be a copy of $K_3  \vee P_k$ in $G+v_iv_j$ and $r_{i, j}=|C(Q_{i, j}) \cap U|$. 
By \cref{claim:4} and \eqref{eq3.6} with $\delta^* = \delta$, we have $2|E(G)|\geq 8n-24+ (k-3)r_{i, j}$ with $1\leq r_{i, j} \leq 3$. Since $n= 6+ \sum_{p=3}^{\delta} |U_p|\geq 6+ \sum_{p=4}^{\delta} |U_p|$, by \eqref{eq3.10}, we have  \eqref{eq3.12} holds. 
Thus \eqref{eq3.1} holds for $k\geq 3$ by direct checking.

\noindent \textbf{Subcase 2.3:} $|E(G[N(v)])| = 7$.

Since $|E(G_{K_3}| \geq 5$ and $|E(G_{K_3})| + |E[V_{K_3}, V_{NK_3}]|= |E(G[N(v)])|$, we have $2|E(G_{K_3})| + |E[V_{K_3}, V_{NK_3}]| \geq 12$. 

There exist $v_i \in V_{K_3}$ and $v_j \in V_{NK_3}$ such that $v_iv_j \notin E(G)$ and $E(G[N(v_i)\cap N(v_j)\cap N(v)])=\emptyset$ by (ii) of \cref{claim:7}. 
Let $Q_{i, j}$ be a copy of $K_3  \vee P_k$ in $G+v_iv_j$ and $r_{i, j}=|C(Q_{i, j}) \cap U|$. Then $0\leq r_{i, j} \leq 3$ by \cref{claim:5}. 

If $r_{i, j} = 0$, by \cref{claim:5} and \eqref{eq3.5} with $\delta^* = \delta$, we have $2|E(G)| \geq 8n+k-24$, where $n\geq k+6$. Thus \eqref{eq3.1} holds for $k\geq 3$ by direct checking. 

If $1\leq r_{i, j} \leq 3$, by \cref{claim:5} and \eqref{eq3.6} with $\delta^* = \delta$, we have $2|E(G)|\geq 8n-22+ (k-3)r_{i, j}$. Since $n= 6+ \sum_{p=3}^{\delta} |U_p|\geq 6+ \sum_{p=4}^{\delta} |U_p|$, by \eqref{eq3.11}, we have  \eqref{eq3.13} holds. 
Thus \eqref{eq3.1} holds for $k\geq 3$ by direct checking.

\noindent \textbf{Case 3:} $|V_{K_3}| = 5$.

Firstly, we show $|E(G[N(v)])|= |E(G_{K_3})| \ge 8$.

Since $|V_{K_3}| = 5$, $G_{K_3}$ contains at least two copies of $K_3$ and $|E(G_{K_3})| \ge 6$. 
If $G_{K_3}$ contains exactly two copies of $K_3$, then there exists $v_s\in N(v)$ such that $G[N(v)\setminus \{v_s\}]$ contains no copy of $K_3$ and $d_{G[N(v)]}= \delta -1$, which implies $d(v_s)= n-1$ by  \cref{claim:2}, a contradiction with $\Delta\leq n-2$.  
Therefore, $G_{K_3}$ contains at least three copies of $K_3$ and thus $|E(G_{K_3})|=|E(G[N(v)])| \ge 7$.

If $|E(G_{K_3})| = 7$, then by direct checking, $G_{K_3}\in \{G_1, G_2\}$, where the graphs $G_1$ and $G_2$ are shown in Figure \ref{fig:graphs}. Thus there exists $v_s \in N(v)$ satisfying the conditions of \cref{claim:2}, which implies $d(v_s)= n-1$,  a contradiction with $\Delta\leq n-2$. 
Therefore, $|E(G[N(v)])|= |E(G_{K_3})| \ge 8$.

Now, we complete the proof of Case 3 by the following two subcases.

\begin{figure}[htbp] 
    \centering
\begin{tikzpicture}[
    scale=0.8, 
    dot/.style={circle, fill=black, inner sep=1.5pt}
]

    \begin{scope}[xshift=0cm]
        \node[dot] (v1) at (-2.5, 0) {};
        \node[dot] (v2) at (-1.2, 0) {};
        \node[dot] (v3) at (0, 0) {};
        \node[dot] (v4) at (1.2, 0) {};
        \node[dot] (v5) at (2.5, 0) {};

        \draw (v1) -- (v2) -- (v3) -- (v4) -- (v5); 
        \draw (v1) to[bend left=40] (v3); 
        \draw (v2) to[bend right=40] (v4); 
        \draw (v3) to[bend left=40] (v5); 
        
        \node[below] at (v3) {$v_s$};
        \node[below] at (0, -1.5) {$G_1$};
    \end{scope}

    \begin{scope}[xshift=6cm]
        \node[dot] (v1) at (-2.5, 0) {};
        \node[dot] (v2) at (-1.2, 0) {};
        \node[dot] (v3) at (0, 0) {};
        \node[dot] (v4) at (1.2, 0) {};
        \node[dot] (v5) at (2.5, 0) {};

        \draw (v1) -- (v2) -- (v3) -- (v4) ; 
        \draw (v1) to[bend left=40] (v3); 
        \draw (v2) to[bend right=40] (v4); 
        \draw (v3) to[bend left=40] (v5); 
        \draw (v2) to[bend left=60] (v5); 

        \node[below] at (v2) {$v_s$};
        \node[below] at (0, -1.5) {$G_2$};
    \end{scope}

\end{tikzpicture}
\caption{The graphs $G_1$ and $G_2$.} 
    \label{fig:graphs} 
\end{figure}

\noindent \textbf{Subcase 3.1:} $|E(G[N(v)])| \geq 9$.

By \eqref{eq3.8}, we have 
$2|E(G)| \ge 8n - 20$ and \eqref{eq3.1} holds for $k\geq 3$ by direct checking.

\noindent \textbf{Subcase 3.2:} $|E(G[N(v)])| = 8$.

In this subcase, $G_{K_3}\in \{G_3, G_4\}$, where $G_3$ and $G_4$ are obtained from $K_5$ by deleting two adjacent edges and two non-adjacent edges, respectively.

\noindent \textbf{Subcase 3.2.1:} $G_{K_3} \cong G_4$.

Let $v_s \in V(G_4)$ with $d_{G_2}(v_s) = 4$. Then $G_4 \setminus \{v_s\}$ contains no copy of $K_3$. By \cref{claim:2}, we have $d(v_s) = n - 1$, a contradiction with $\Delta \leq n-2$. 

\noindent \textbf{Subcase 3.2.2:} $G_{K_3} \cong G_3$. 

Without loss of generality, we assume $d_{G[N(v)]}(v_1) = 2$ and $v_1 v_2, v_1 v_3 \notin E(G)$.
Since $d(v_1)\geq \delta= 5$, we have $|U|\geq 2$, and thus $n\geq 8$.
Let $Q_{1,2}$ be a copy of $K_3  \vee P_k$ in $G+v_1v_2$ and $r_{1,2}=|C(Q_{1,2}) \cap U|$.
By \eqref{eq3.5} with $\delta^*=\delta$, we have 
$2|E(G)| \ge 8n - 22 + (k - 3)r_{1,2} + |U_4| + 2|U_5|$.

If $k=3$, then $|E(G)| \ge 4n - 11$, and thus \eqref{eq3.1} holds.

If $k\geq 4$, then we only need to show $|E(G)| \ge 4n - 10$ if $|U_5| =0$ and $r_{1,2}= 0$. 
Clearly, $v_1,v_2\in V(Q_{1,2})$, $v\notin C(Q_{1,2})$ since $d_{G+v_1v_2}(v)=5 < k+2$, $C(Q_{1,2}) \subseteq \{v_1, v_2, v_4, v_5\}$ by (4) of Proposition \ref{proposition:2.5}, and $|V(Q_{1,2})\cap U|\geq k-3$. Then there exists $w \in PP(Q)$ such that $wv_1, wv_2 \in E(G)$, which implies $w \in U_{\geq4}$ by \cref{claim:3}. Thus $|E(G)| \geq 4n - 10$ and \eqref{eq3.1} holds for $k\geq 4$ by direct checking.

\subsection{$\delta=6 \leq \Delta \leq n-2$}

Firstly, we show \cref{claim:8} holds.

\begin{claim} \label{claim:8}
If there exist $m$ distinct pairs $\{v_i, v_{i^\prime}\}$ of vertices in N(v) such that $v_iv_{i^\prime} \notin E(G)$ and $N(v_i) \cap N(v_{i^\prime}) \cap N(v) = \emptyset$, where $1\leq m \leq 3$, then we have \\
\textup{(i)} $|N( v_i)\cap N(v_{i^\prime}) \cap U_{\geq4}|\geq 3$. \\
\textup{(ii)} if $\{v_a, v_{a^\prime}, v_b, v_{b^\prime}\}\subseteq N(v)$ such that $N(v_i) \cap N(v_{i^\prime}) \cap N(v) = \emptyset$, where $i\in \{a, b\}$, and $u \in N( v_a)\cap N(v_{a^\prime}) \cap N( v_b)\cap N(v_{b^\prime})\cap U_{\geq 4} \neq \emptyset$, then $u \in U_{\geq 5}$.\\
\textup{(iii)} $\sum_{p=4}^\delta (p-3)|U_p| \ge 3m$. 
\end{claim}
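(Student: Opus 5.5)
For part (i) I would argue inside a single pair $\{v_i,v_{i'}\}$ with $v_iv_{i'}\notin E(G)$ and $N(v_i)\cap N(v_{i'})\cap N(v)=\emptyset$. Let $Q$ be a copy of $K_3\vee P_k$ in $G+v_iv_{i'}$. By (1) of Proposition \ref{proposition:2.5} there is a set $V_1$ containing $v_i,v_{i'}$ with $G[V_1]+v_iv_{i'}\cong K_5$. The three vertices of $V_1\setminus\{v_i,v_{i'}\}$ are common neighbours of $v_i$ and $v_{i'}$ in $G$. None of them is $v$, since $v\notin N(v_i)\cap N(v_{i'})$ would force $v$ into a $K_5$ together with vertices of $U$, which is impossible. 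None of them lies in $N(v)$, by the assumption $N(v_i)\cap N(v_{i'})\cap N(v)=\emptyset$. Hence all three lie in $U$, and by \cref{claim:3} they lie in $U_{\ge4}$. This gives $|N(v_i)\cap N(v_{i'})\cap U_{\ge4}|\ge3$. Alternatively, (2) of Proposition \ref{proposition:2.5} gives three paths of length $2$ from $v_i$ to $v_{i'}$, and their middle vertices must lie in $U$.

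For part (ii), take $u$ adjacent to $v_a,v_{a'},v_b,v_{b'}$. These are four distinct vertices of $N(v)$, because the pairs are distinct and disjoint as sets in the hypothesis; if the pairs share a vertex, there are only three and I would treat that case separately. Then $|N(u)\cap N(v)|\ge4$. To force $p\ge5$, I use (1) of \cref{claim:1}: the set $N(u)\cap N(v)$ must contain a triangle. Suppose $p=4$, so $N(u)\cap N(v)=\{v_a,v_{a'},v_b,v_{b'}\}$. Here $v_av_{a'}$ and $v_bv_{b'}$ are non-edges, so any triangle among these four vertices would use three of them and hence contain one of these two non-adjacent pairs. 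Concretely, a triangle must omit one vertex, say $v_{b'}$, so it is $\{v_a,v_{a'},v_b\}$, which contains the non-edge $v_av_{a'}$; every other choice fails symmetrically. This contradicts (1) of \cref{claim:1}, so $p\ge5$ and $u\in U_{\ge5}$. If the two pairs share a vertex, $N(u)\cap N(v)$ has only three guaranteed elements. I would handle that by noting that the common neighbourhood conditions give that $v_a$ (say, the shared vertex) has no common neighbour in $N(v)$ with either partner, and then again argue that $p=4$ leaves no triangle. This degenerate case is the one I expect to need care, and it may need to be excluded or stated differently.

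For part (iii), the plan is double counting. For each of the $m$ pairs, (i) gives a set $S_\ell$ of at least $3$ vertices of $U_{\ge4}$. A vertex $u\in U_{\ge4}$ lying in $S_\ell$ for several pairs contributes $p-3$ to the sum. If $u$ lies in $j\ge2$ of the sets, then by the argument of (ii) it is adjacent to at least the endpoints of those pairs, so $p\ge j+3$; a careful count gives $p-3\ge j$ for $j\le3$. Summing over $u$ gives $\sum_{p=4}^{\delta}(p-3)|U_p|\ge\sum_\ell|S_\ell|\ge 3m$. The main obstacle is this case analysis bounding $p-3$ by the multiplicity $j$ when $j=3$ and the pairs overlap. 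I would handle it by listing how three non-edges can sit among the at most six endpoints and checking, via (1) of \cref{claim:1}, that a triangle in $N(u)\cap N(v)$ needs $p\ge j+3$.
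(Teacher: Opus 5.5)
Your proof is correct if the $m$ pairs are pairwise disjoint, and for (iii) it takes a cleaner route than the paper. The paper proves (iii) by explicit bookkeeping. For $m=2$ it splits on $\ell$, the number of $U_{\ge5}$-vertices adjacent to all four endpoints. For $m=3$ it introduces $s,s_a,s_b,s_c$ and runs through four subcases. Your weighted count,
\[
\sum_{p=4}^{\delta}(p-3)|U_p|=\sum_{u\in U_{\ge4}}\bigl(|N(u)\cap N(v)|-3\bigr)\ \ge\ \sum_{u}j(u)=\sum_{\ell}|S_\ell|\ \ge\ 3m,
\]
uses $|N(u)\cap N(v)|\ge j(u)+3$: for $j=1$ because $S_\ell\subseteq U_{\ge4}$, for $j=2$ by (ii), and for $j=3$ because $u$ then has six distinct neighbours in $N(v)$. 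This replaces the whole case analysis in one line and extends verbatim to any number of disjoint pairs. It also sidesteps a small looseness in the paper: its bound $|N(v_a)\cap N(v_{a'})\cap U_4|\ge 3-\ell$ tacitly assumes every $U_{\ge5}$-vertex of $N(v_a)\cap N(v_{a'})$ is also adjacent to $v_b,v_{b'}$. The conclusion survives only because of the $2|U_{\ge5}|$ term. For (i) the paper just cites Claim 4, whose proof produces the required common neighbours; your direct use of (1) of Proposition 2.5 with Claim 3 is equally valid. Your (ii) is the paper's argument. The paper excludes the triangle via the empty common neighbourhoods rather than the non-edges, but either works.

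Two corrections. First, do not try to handle overlapping pairs. Take pairs $\{v_a,v_{a'}\}$ and $\{v_a,v_{b'}\}$, and a vertex $u$ with $N(u)\cap N(v)=\{v_a,v_{a'},v_{b'},w\}$, where $v_{a'}v_{b'}w$ is a triangle and $wv_a\notin E(G)$. This is consistent with the hypotheses and with Claims 1 and 3, yet $p=4$ while $j=2$. So neither (ii) nor $p-3\ge j$ can be derived there, and your planned check via (1) of Claim 1 would fail. The claim must be read with disjoint pairs. That is what the paper's proof assumes, since it writes the endpoints as $4$- and $6$-element sets, and every application in the cases $\delta=6,7$ uses disjoint pairs.

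Second, in (i) the phrase ``since $v\notin N(v_i)\cap N(v_{i'})$'' is backwards: $v$ is always a common neighbour of $v_i$ and $v_{i'}$. The correct reason $v\notin V_1$ is this: otherwise the other two vertices of the triangle $V_1\setminus\{v_i,v_{i'}\}$ would be adjacent to $v$, hence common neighbours of $v_i,v_{i'}$ inside $N(v)$. For the same reason your ``alternative'' via (2) of Proposition 2.5 fails. The three guaranteed paths of length $2$ may include $v_i\,v\,v_{i'}$, which leaves only two middle vertices guaranteed in $U$. You need the clique from (1).
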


\noindent \textbf{\textit{Proof of \cref{claim:8}.}} 
It is clear that (i) holds by \cref{claim:4}.
 
Now we show (ii) holds. Since $N(v_a) \cap N(v_{a^\prime}) \cap N(v) = \emptyset$ and $N(v_b) \cap N(v_{b^\prime}) \cap N(v) = \emptyset$, we have $G[\{v_a, v_b, v_{a^\prime}, v_{b^\prime}\}]$ contains no copy of $K_3$. Therefore, by (1) of \cref{claim:1}, we have $X_{a a^\prime b b^\prime} = \emptyset$, which implies $u \in U_{\geq 5}$.

Now we show (iii) holds by investigating the three cases where $m=1, 2,$ and $3$. 

If $m = 1$, then (iii) holds by \cref{claim:4}.

If $m = 2$, we take $\{v_a, v_{a^\prime}, v_b, v_{b^\prime}\}\subseteq N(v)$ such that $N(v_i) \cap N(v_{i^\prime}) \cap N(v) = \emptyset$, where $i\in \{a, b\}$. 
By (ii), we have $N( v_a)\cap N(v_{a^\prime}) \cap N( v_b)\cap N(v_{b^\prime})\cap U_4 = \emptyset$. We now assume that $|N( v_a)\cap N(v_{a^\prime}) \cap N( v_b)\cap N(v_{b^\prime})\cap U_{\geq5}| = \ell$. 

When $\ell \geq 3$, then  $\sum_{p=4}^\delta (p-3)|U_p| \geq 2|U_{\geq 5}| \geq 6$, so (iii) holds. 
When $0\leq \ell < 3$, then $|U_4|\geq |N( v_a)\cap N(v_{a^\prime}) \cap U_4| + |N( v_b)\cap N(v_{b^\prime}) \cap U_4| \geq (3-\ell) + (3-\ell) =6-2\ell$, and thus $\sum_{p=4}^\delta (p-3)|U_p| \geq |U_4| + 2|U_{\geq5}|\geq 6-2\ell +2\ell =6$, so (iii) holds. 

If $m = 3$, we take $\{v_a, v_{a^\prime}, v_b, v_{b^\prime}, v_c, v_{c^\prime}\} \subseteq N(v)$ such that $N(v_i) \cap N(v_{i^\prime}) \cap N(v) = \emptyset$, where $i\in \{a, b, c\}$. 
Let $|N( v_a)\cap N(v_{a^\prime}) \cap N( v_b)\cap N(v_{b^\prime}) \cap N( v_c)\cap N(v_{c^\prime}) \cap U_{\geq6}|=s$, $|N(v_x) \cap N(v_{x^\prime}) \cap N(v_y) \cap N(v_{y^\prime}) \cap U_5|= s_z$, where $\{x,y,z\} \subseteq \{a,b,c\}$. Then $|U_5|\geq s_a + s_b + s_c$. Now we show $\sum_{p=4}^\delta (p-3)|U_p| \ge 9$.

If $s+s_x+s_y \geq 3$ for any $\{x,y\} \subseteq \{a,b,c\}$, then  $\sum_{p=4}^\delta (p-3)|U_p| \geq 2|U_5|+ 3|U_{\geq6}|\geq 2(s_a+s_b+s_c)+3s \geq 9$. 

If $s+s_a+s_b \geq 3$, $s+s_a+s_c \geq 3$ and $s+s_b+s_c < 3$, then $|U_4|\geq |N( v_a)\cap N(v_{a^\prime}) \cap U_4|\geq 3-s-s_b-s_c$, and thus $\sum_{p=4}^\delta (p-3)|U_p| \geq |U_4| + 2|U_5| +3|U_{\geq6}|\geq (3-s-s_b-s_c) +2(s_a+s_b+s_c)+3s = 3+ (s+s_a+s_b)+ (s+s_a+s_c)\geq 9$. 

If $s+s_a+s_b \geq 3$, $s+s_a+s_c < 3$ and $s+s_b+s_c < 3$, then $|U_4|\geq |N( v_a)\cap N(v_{a^\prime}) \cap U_4|+ |N(v_b)\cap N(v_{b^\prime}) \cap U_4|\geq (3-s-s_b-s_c)+ (3-s-s_a-s_c)$, and thus $\sum_{p=4}^\delta (p-3)|U_p| \geq |U_4| + 2|U_5| +3|U_{\geq6}|\geq (3-s-s_b-s_c)+ (3-s-s_a-s_c) +2(s_a+s_b+s_c)+3s = 3+ 3+ (s+s_a+s_b)\geq 9$. 

If $s+s_x+s_y < 3$ for any $\{x,y\} \subseteq \{a,b,c\}$, then $|U_4|\geq |N( v_a)\cap N(v_{a^\prime}) \cap U_4| + |N( v_b)\cap N(v_{b^\prime}) \cap U_4|+ |N( v_c)\cap N(v_{c^\prime}) \cap U_4| \geq (3-s-s_b-s_c)+ (3-s-s_a-s_c)+ (3-s-s_a-s_b) =9-2(s_a+s_b+s_c)-3s$, and thus $\sum_{p=4}^\delta (p-3)|U_p| \geq |U_4| + 2|U_5| +3|U_{\geq6}|\geq [9-2(s_a+s_b+s_c)-3s] +2(s_a+s_b+s_c)+3s =9$. 

Therefore, we complete the proof.
\hfill $\square$

By (3) of \cref{claim:1}, $G[N(v)]$ contains a copy of $K_3$, so $3 \leq |V_{K_3}| \leq 6= |N(v)|$. Now we show \eqref{eq3.1} holds by the following four cases. 

\noindent \textbf{Case 1:} $|V_{K_3}| = 3$. 

In this case, $|E(G_{K_3})| = |V_{NK_3}|=3$. By (i) of \cref{claim:7}, we have $|E[\{v_j\}, V_{K_3}]| \le 1$ for any $v_j \in V_{NK_3}$, and then $d_{G[N[v]]}(v_j) \leq 3$. Since $d(v_j)\geq \delta = 6$, we have $|U|\geq 3$ and $n \ge  10$.
By \eqref{eq3.9}, we have $2|E(G)| \geq 9n - 30$ and \eqref{eq3.1} holds by direct checking.

\noindent \textbf{Case 2:} $|V_{K_3}| = 4$.

In this case, we have $|E(G_{K_3})| \geq 5$ and $G_{K_3}$ contains at least two copies of $K_3$. 

If $|E(G[N(v)])| \le 8$, then $\sum_{i=1}^6 d_{G[N(v)]}(v_i) \leq 16$, and thus there exists some $v_j \in N(v)$ such that $d_{G[N(v)]}(v_j) \le 2$. Since $d(v_j)\geq \delta =6$, we have $|U|\geq 3$, which implies $n \geq  10$.
Especially, if $k\geq 6$, then $n\geq a_k+3 \geq 13$ by the definition of $a_k$.
By \eqref{eq3.9}, we have $2|E(G)| \geq 9n - 31$ and \eqref{eq3.1} holds by direct checking.

If $|E(G[N(v)])| \ge 9$, then $|E(G_{K_3})| + |E[V_{K_3}, V_{NK_3}]| \geq 8$ and $2|E(G_{K_3})| + |E[V_{K_3}, V_{NK_3}]|\\ \geq 13$ since $|E(G_{NK_3})| \le 1$.
For any $v_j \in V_{NK_3}$, we have $|E[\{v_j\}, V_{K_3}]| \le 2$. Otherwise, there exist $v_a, v_b \in V_{K_3}\cap N(v_j)$ such that $v_av_b\in E(G)$, and thus $G[\{v_a, v_b, v_j\}]\cong K_3$, which implies $v_j \in V_{K_3}$, a contradiction.
Thus $d_{G[N(v)]}(v_j) \le 3$ and $|U|\geq 2$ since $d(v_j)\geq \delta =6$, which implies $n \ge  9$.
By \eqref{eq3.9}, we have $2|E(G)| \geq 9n - 28$ and \eqref{eq3.1} holds by direct checking.

\noindent \textbf{Case 3:} $|V_{K_3}| = 5$.

In this case, we have $6 \leq |E(G_{K_3})|\leq 10$ and $G_{K_3}$ contains at least two copies of $K_3$. Let $V_{NK_3}= \{v_j\}$. Now we study the following three subcases.

\noindent \textbf{Subcase 3.1:} $|E(G[N(v)])| = 6$.

In this subcase, we have $|E(G_{K_3})| = |E(G[N(v)])|=6$, $|E[\{v_j\}, V_{K_3}]| = 0$, $|U|\geq 5$ since $d(v_j)\geq \delta =6$, and $n \geq 12$. 

If $3\leq k \leq 5$, then $2|E(G)| \geq 9n - 34$ by \eqref{eq3.9}, and thus \eqref{eq3.1} holds by direct checking.

If $k\geq 6$, then $k+2-\delta >0$ and $N(v_i) \cap N(v_j) \cap N(v) = \emptyset$, where $v_i \in V_{K_3}$. Let $Q_{i, j}$ be a copy of $K_3  \vee P_k$ in $G+v_iv_j$ and $r_{i, j}=|C(Q_{i, j}) \cap U|$. 
By \cref{claim:4} and \eqref{eq3.6} with $\delta^* = \delta$, we have $2|E(G)|\geq 9n-34+ (k-4)r_{i, j}$ with $1\leq r_{i, j} \leq 3$. Thus $2|E(G)|\geq 9n+k-38$ and \eqref{eq3.1} holds for $k \geq 6$ by direct checking.

\noindent \textbf{Subcase 3.2:} $|E(G[N(v)])| = 7$.

In this subcase, we have $6\leq |E(G_{K_3})|\leq 7$ and $0 \leq |E[V_{K_3}, V_{NK_3}]|\leq 1$, which implies $2|E(G_{K_3})| + |E[V_{K_3}, V_{NK_3}]| \ge 13$. 
Since $d(v_j)\geq \delta =6$, we have $|U|\geq 4$ and $n \geq 11$. Especially, if $k\geq 6$, then $n\geq a_k+3 \geq 13$ by the definition of $a_k$.
By \eqref{eq3.9}, we have $2|E(G)| \geq 9n - 33$ and \eqref{eq3.1} holds by direct checking.

\noindent \textbf{Subcase 3.3:} $|E(G[N(v)])| \geq 8$.

If $|E(G[N(v)])| =8$, then $2|E(G_{K_3})|+ |E[V_{K_3}, V_{NK_3}]|= |E(G_{K_3})| + |E(G[N(v)])| \ge 6 + 8 = 14$ and $|E[V_{K_3}, V_{NK_3}]| \le 2$. Thus $d_{G[N[v]]}(v_j) \leq 3$, $|U|\geq 3$ and $n \geq 10$.

If $|E(G[N(v)])| \geq 9$, then $2|E(G_{K_3})|+ |E[V_{K_3}, V_{NK_3}]|=|E(G_{K_3})| + |E(G[N(v)])| \ge 6 + 9 = 15$ and $|E[V_{K_3}, V_{NK_3}]| \le 3$. Otherwise, if $|E[V_{K_3}, V_{NK_3}]| \geq 4$, there exist $v_a, v_b\in V_{K_3}$ such that $v_av_b, v_av_j, v_jv_b \in E(G)$, which implies $v_j \in V_{K_3}$, a contradiction. Thus $d_{G[N[v]]}(v_j) \leq 4$, $|U|\geq 2$ and $n \geq 9$.

If $3\leq k \leq 4$, then $2|E(G)| \geq 9n - 32$ by \eqref{eq3.9}, and thus \eqref{eq3.1} holds by direct checking.

If $k\geq 5$, then $k+2-\delta >0$. By (ii) of \cref{claim:7}, there exists $v_i \in V_{K_3}$ such that $v_iv_j \notin E(G)$ and $E(G[N(v_i)\cap N(v_j)\cap N(v)])=\emptyset$. 
Let $Q_{i, j}$ be a copy of $K_3  \vee P_k$ in $G+v_iv_j$ and $r_{i, j}=|C(Q_{i, j}) \cap U|$. 
By \cref{claim:5} and \eqref{eq3.6} with $\delta^* = \delta$, we have  
\begin{equation*} 2E(G) \geq \begin{cases} 
9n-32+ (k-4)r_{i, j},  & \text{if $|E(G[N(v)])| = 8$}; \\
9n-31+ (k-4)r_{i, j},  & \text{if $|E(G[N(v)])| \geq 9$} \\
\end{cases}
\end{equation*}
with $0\leq r_{i, j} \leq 3$ and $n\geq k+7$ if $r=0$. 
Especially, if $k\geq 6$, then $n\geq a_k+3 \geq 13$ by the definition of $a_k$.
Thus \eqref{eq3.1} holds by direct checking.

\noindent \textbf{Case 4:} $|V_{K_3}| = 6$.

Since $|V_{K_3}| = 6$, we have $d_{G[N(v)]}(v_s) \ge 2$ for any $v_s \in N(v)$, which implies $|E(G[N(v)])| \\
\ge 6$. 
For $v_s \in N(v)$, we let  
\[
N^l(v_s) = \{u\in N(v)\, :\, d_{G[N(v)]}(u,v_s) = l\}.
\]

\noindent \textbf{Subcase 4.1:} $|E(G[N(v)])| = 6$.

In this subcase, $G[N(v)] \cong 2K_3$. Without loss of generality, we assume $G[\{v_1,v_2,v_3\}]\cong K_3$ and $G[\{v_4,v_5,v_6\}]\cong K_3$. 
Since $v_iv_{i+3}\notin E(G)$, $N(v_i) \cap N(v_{i+3}) \cap N(v) = \emptyset$ for $i \in \{1, 2, 3\}$, by (i) of \cref{claim:8}, we have $\sum_{p=4}^6 |U_p| \ge 3$ and $n\geq 10$.
By \eqref{eq3.8}, we have $2|E(G)| \geq 9n - 30$ and \eqref{eq3.1} holds by direct checking.

\noindent \textbf{Subcase 4.2:} $|E(G[N(v)])| = 7$. 

In this subcase, $G[N(v)]$ contains six vertices and seven edges, and each vertex is a $K_3$-vertex.
Without loss of generality, we assume $G[\{v_1, v_2, v_3\}] \cong K_3$, $G[\{v_4, v_5, v_6\}] \cong K_3$, and $v_3 v_6 \in E(G)$. 
Since $v_iv_{i+3}\notin E(G)$, $N(v_i) \cap N(v_{i+3}) \cap N(v) = \emptyset$ for $i \in \{1, 2\}$, by (iii) of \cref{claim:8}, we have $\sum_{p=4}^6 (p-3)|U_p| \ge 6$. Since $d_{G[N[v]]}(v_1) = 3$ and $d(v_1)\geq \delta =6$, we have $|U|\geq 3$, and thus $n \ge 10$. Especially, if $k\geq 6$, then $n\geq a_k+3 \geq 13$ by the definition of $a_k$.
By \eqref{eq3.8}, we have $2|E(G)| \geq 9n - 31$ and \eqref{eq3.1} holds by direct checking.

\noindent \textbf{Subcase 4.3:} $|E(G[N(v)])| = 8$.

Clearly, $G[N(v)]$ is not complete. Now we show there exist two vertices $v_i, v_j \in N(v)$ such that $v_i v_j \notin E(G)$ and $N(v_i) \cap N(v_j) \cap N(v) = \emptyset$. Otherwise, we have $d_{G[N(v)]}(v_i, v_j) = 2$ for any two vertices $v_i, v_j \in N(v)$ with $v_i v_j \notin E(G)$ since $N(v_i) \cap N(v_j) \cap N(v) \neq \emptyset$, and there exists a vertex with degree two in $G[N(v)]$ since $|E(G[N(v)])| = 8$, and $|V_{K_3}|=|N(v)|=6$. 
Without loss of generality, we take $d_{G[N(v)]}(v_1) = 2$ and $N^1(v_1) = \{v_2, v_3\}$, and thus $N^2(v_1) = \{v_4, v_5, v_6\}$.
Since $v_1 \in V_{K_3}$, we have $v_2 v_3 \in E(G)$. Since $|E[\{v_1\}, N^1(v_1)]| =2$, $|E[N^1(v_1), N^2(v_1)]| \geq 3$, we have $|E(G[N^2(v_1)])| \le 2$.

If $|E(G[N^2(v_1)])| = 2$, then $|E[N^1(v_1), N^2(v_1)]| = 3$. Without loss of generality, we assume $v_2 v_4 \in E(G)$, then $v_2 v_5, v_2 v_6 \in E(G)$ since $v_5, v_6 \in V_{K_3}$. By \cref{claim:2}, we have $d(v_2) = n - 1$, a contradiction with $\Delta \leq n-2$.

If $|E(G[N^2(v_1)])| = 1$, then $|E[N^1(v_1), N^2(v_1)]| = 4$. Without loss of generality, we assume $v_4 v_5 \in E(G)$ and $v_2 v_4 \in E(G)$, and thus $v_2 v_6 \in E(G)$ since $N(v_4) \cap N(v_6) \cap N(v) \neq \emptyset$, and thus $v_2 v_5 \in E(G)$ since $N(v_5) \cap N(v_6) \cap N(v) \neq \emptyset$. By \cref{claim:2}, we have $d(v_2) = n - 1$, a contradiction with $\Delta \leq n-2$. 

If $|E(G[N^2(v_1)])| = 0$, then $v_2 v_4, v_2 v_5, v_2 v_6, v_3 v_4, v_3 v_5, v_3 v_6 \in E(G)$ since $v_4, v_5, v_6 \in V_{K_3}$, which implies $|E(G[N(v)])| \geq 9$, a contradiction.

By the above arguments, there exist two vertices $v_i, v_j \in N(v)$ such that $v_i v_j \notin E(G)$ and $N(v_i) \cap N(v_j) \cap N(v) = \emptyset$, which implies $\sum_{p=4}^{\delta} (p-3)|U_p|\geq \sum_{p=4}^{\delta} |U_p| \ge 3$ and $n\geq 10$ by \cref{claim:4}. 

If $3\leq k\leq 4$, then $2|E(G)| \geq 9n - 32$ by \eqref{eq3.8}, and \eqref{eq3.1} holds by direct checking. 

If $k\geq 5$, then $k + 2 - \delta > 0$. Let $Q_{i, j}$ be a copy of $K_3  \vee P_k$ in $G+v_iv_j$ and $r_{i, j}=|C(Q_{i, j}) \cap U|$. 
By \cref{claim:4} and \eqref{eq3.5} with $\delta^* = \delta$, we have $2|E(G)|\geq 9n-32+ (k-4)r_{i, j}$ with $1\leq r_{i, j} \leq 3$. 
Then $2|E(G)| \geq 9n + k - 36$ and \eqref{eq3.1} holds by direct checking.

\noindent \textbf{Subcase 4.4:} $|E(G[N(v)])| = 9$.

Now we complete the proof of Subcase 4.4 by the following two Subcases.

\noindent \textbf{Subcase 4.4.1:} There exist some two vertices $v_i, v_j \in N(v)$ such that $v_i v_j \notin E(G)$ and $N(v_i) \cap N(v_j) \cap N(v) = \emptyset$.

By \cref{claim:4}, we have $\sum_{p=4}^{\delta} (p-3)|U_p|\geq \sum_{p=4}^{\delta} |U_p| \ge 3$, so $n\geq 10$. 

If $3\leq k\leq 4$, then $2|E(G)| \geq 9n - 30$ by \eqref{eq3.8}, and \eqref{eq3.1} holds by direct checking. 

If $k\geq 5$, then $k + 2 - \delta > 0$. Let $Q_{i, j}$ be a copy of $K_3  \vee P_k$ in $G+v_iv_j$ and $r_{i, j}=|C(Q_{i, j}) \cap U|$. 
By \cref{claim:4} and \eqref{eq3.5} with $\delta^* = \delta$, we have $2|E(G)|\geq 9n-30+ (k-4)r_{i, j}$ with $1\leq r_{i, j} \leq 3$. 
Then $2|E(G)| \geq 9n + k - 34$ and \eqref{eq3.1} holds by direct checking.

\noindent \textbf{Subcase 4.4.2:} For any two vertices $v_i, v_j \in N(v)$ with $v_i v_j \notin E(G)$, $N(v_i) \cap N(v_j) \cap N(v) \neq \emptyset$.

Clearly, $d_{G[N(v)]}(v_i, v_j) = 2$ for any $v_i, v_j \in N(v)$ with $v_i v_j \notin E(G)$.
Let $d_{G[N(v)]}(v_1) = \min \{d_{G[N(v)]}(v_s)| v_s\in N(v)\}$. Clearly, $2 \leq d_{G[N(v)]}(v_1) \leq 3$ since $|E(G[N(v)])| = 9$.

\noindent \textbf{Subcase 4.4.2.1:} $d_{G[N(v)]}(v_1) = 3$.

It is evident that $d_{G[N(v)]}(v_s) = 3$ for any $v_s \in N(v)$, and thus $|U|\geq 2$, $n\geq 9$. Without loss of generality, we assume $N^1(v_1) = \{v_2, v_3, v_4\}$ and $N^2(v_1) = \{v_5, v_6\}$. 
If $v_5 v_6 \notin E(G)$, then $|E[N^1(v_1), N^2(v_1)]| = 6$ since $d_{G[N(v)]}(v_5) = d_{G[N(v)]}(v_6) = 3$, which implies $|E(G[\{v_2, v_3, v_4\}])|=0$ and $|V_{K_3}|=0$, a contradiction. 
Therefore, $v_5 v_6 \in E(G)$, and thus $|E[N^1(v_1), N^2(v_1)]| = 4$ since $d_{G[N(v)]}(v_5) = d_{G[N(v)]}(v_6) = 3$, and $|E(G[N^1(v_1)])| = 1$ since $|E(G_{K_3})| = 9$. Without loss of generality, we assume $v_2 v_3 \in E(G)$. Then $v_4 v_5, v_4 v_6 \in E(G)$ since $v_4 \in V_{K_3}$. Thus $E(G[N(v_2) \cap N(v_4) \cap N(v)]) = \emptyset$. By \cref{claim:5}, we have $\sum_{p=4}^6 (p-3)|U_p| \ge 2$. 

If $3\leq k\leq 4$, then $2|E(G)| \geq 9n - 31$ by \eqref{eq3.8}, and \eqref{eq3.1} holds by direct checking. 

If $k\geq 5$, then $k + 2 - \delta > 0$. Let $Q_{2, 4}$ be a copy of $K_3  \vee P_k$ in $G+v_2v_4$ and $r_{2, 4}=|C(Q_{2, 4}) \cap U|$. By \cref{claim:5} and \eqref{eq3.5} with $\delta^* = \delta$, we have $2|E(G)|\geq 9n-33+ \sum_{p=4}^{\delta} (p-3)|U_p|+ r_{2, 4}(k-4)$ with $0\leq r_{2, 4} \leq 3$. Thus $2|E(G)| \geq 9n + k - 35$ by \cref{claim:5}, and \eqref{eq3.1} holds by direct checking.

\noindent \textbf{Subcase 4.4.2.2:} $d_{G[N(v)]}(v_1) = 2$.

Without loss of generality, we assume $N^1(v_1) = \{v_2, v_3\}$ and $N^2(v_1) = \{v_4, v_5, v_6\}$. Clearly, $3\leq |E[N^1(v_1), N^2(v_1)]| \leq 6$. 

If $3 \le |E[N^1(v_1), N^2(v_1)]| \le 5$, then there exists some $v_s \in N^2(v_1)$ such that 
$|E[N^1(v_1), \\ \{v_s\}]|=1$, which implies $E(G[N(v_s) \cap N(v_1) \cap N(v)]) = \emptyset$. This reduces to the same situation as in Subcase 4.4.2.1, and thus \eqref{eq3.1} holds.

If $|E[N^1(v_1), N^2(v_1)]| = 6$, then $|E(G[N^2(v_1)])| = 0$, and $v_2 v_4, v_2 v_5, v_2 v_6, v_3 v_4, v_3 v_5, v_3 v_6 \\ \in E(G)$, which implies $d(v_2) = n - 1$ by \cref{claim:2}, a contradiction with $\Delta\leq n-2$.

\noindent \textbf{Subcase 4.5:} $|E(G[N(v)])| \ge 10$.

If $10 \le |E(G[N(v)])| \le 11$, then $\sum_{s=1}^6 d_{G[N(v)]}(v_s) \leq 22$, and there exists $v_s \in N(v)$ such that $d_{G[N(v)]}(v_s)=2$ or $3$, which implies $n \ge 9$. Especially, if $k\geq 6$, then $n\geq a_k+3 \geq 13$ by the definition of $a_k$.

If $3\leq k\leq 4$, then $2|E(G)| \geq 9n - 31$ by \eqref{eq3.8}, and \eqref{eq3.1} holds by direct checking. 

If $k\geq 5$, then $|V(Q_{i, j}) \cap U| \ge k + 3 - 7 \ge 1$, where $Q_{i, j}$ is a copy of $K_3  \vee P_k$ in $G+v_iv_j$ with $v_i, v_j \in N(v)$ and $v_i v_j \notin E(G)$. Then we can show $|U_{\geq 4}|\geq 1$ by investigating whether $C(Q_{i, j}) \cap U$ is empty.
By \eqref{eq3.8}, we have $2|E(G)|\geq 9n-30$ and \eqref{eq3.1} holds by direct checking.

If $|E(G[N(v)])| \ge 12$, then $2|E(G)| \geq 9n - 27$ by \eqref{eq3.8}, and \eqref{eq3.1} holds by direct checking.

\subsection{$\delta=7 \leq \Delta \leq n-2$}
By (3) of \cref{claim:1}, $G[N(v)]$ contains a copy of $K_3$, then $3 \leq |V_{K_3}| \leq 7= |N(v)|$. 

Since $\delta = 7$, we have $|E(G)| \ge \frac{7}{2}n$, which implies \eqref{eq3.1} holds for $3\leq k \leq 4$. Next, we consider $k \ge 5$. Especially, if $k\geq 6$, we have $n\geq a_k+3 \geq 13$ by the definition of $a_k$.

If $\sum_{s=1}^7 d_{G[N(v)]}(v_s) \leq 7t-1$, then there exists $v_i\in N(v)$ such that $d_{G[N(v)]}(v_i) \leq t-1$, which implies $|U|\geq \delta- t$ and $n=1+\delta+|U|\geq 1+2\delta-t$ for $3\leq t\leq 5$.

If $|E(G[N(v)])|\ge 18$, then $2|E(G)| \geq 10n - 30$ by \eqref{eq3.8}, and \eqref{eq3.1} holds by direct checking.
If $14 \le |E(G[N(v)])| \le 17$, then $t=5$, $n\geq 10$, and $2|E(G)| \ge 10n - 38$ by \eqref{eq3.8}, and thus \eqref{eq3.1} holds by direct checking.
If $11 \le |E(G[N(v)])| \le 13$, then $t=4$, $n\geq 11$, and $2|E(G)| \ge 10n - 44$ by \eqref{eq3.8}, and thus \eqref{eq3.1} holds by direct checking.
If $|E(G[N(v)])| = 10$, then $t=3$, $n\geq 12$, and $2|E(G)| \ge 10n - 46$ by \eqref{eq3.8}, and thus \eqref{eq3.1} holds by direct checking.

In the following, we consider $|E(G[N(v)])| \le 9$. 
Clearly, there exists a vertex $v_i$ such that $d_G[N(v)](v_i)\leq 2$, and $n\geq 12$ for $k\geq 5$, $n\geq 13$ for $k\geq 6$. Now we complete the proof of $\delta=7$ by the following three cases.

\noindent \textbf{Case 1:} $3 \leq |V_{K_3}| \leq 5$.

In this case, $|E(G_{K_3})| = 3$ for $|V_{K_3}|=3$; $|E(G_{K_3})| \ge 5$ for $|V_{K_3}|=4$; $|E(G_{K_3})| \ge 6$ for $|V_{K_3}|=5$. By \eqref{eq3.9}, we have 
\begin{equation*} 2E(G) \geq \begin{cases} 
9n-36,  & \text{if $|V_{K_3}| = 3$}; \\
9n-38,  & \text{if $|V_{K_3}| = 4$}; \\
9n-42, & \text{if $|V_{K_3}| = 5$}. \\
\end{cases}
\end{equation*}
Thus \eqref{eq3.1} holds by direct checking.

\noindent \textbf{Case 2:} $|V_{K_3}| = 6$.

In this case, $|E(G_{K_3})| \ge 6$ and $G_{K_3}$ contains at least two copies of $K_3$.

\noindent \textbf{Subcase 2.1:} $|E(G[N(v)])| = 6$.

In this subcase, $G_{K_3} \cong 2K_3$. Without loss of generality, we assume $G[\{v_1,v_2,v_3\}]\cong K_3$ and $G[\{v_4,v_5,v_6\}]\cong K_3$. Since $v_iv_{i+3}\notin E(G)$, $N(v_i) \cap N(v_{i+3}) \cap N(v) = \emptyset$ for $i \in \{1, 2, 3\}$, by (iii) of \cref{claim:8}, we have $\sum_{p=4}^7 (p-3)|U_p| \ge 9$ . 
By \eqref{eq3.8}, we have $2|E(G)| \geq 10n - 45$ and \eqref{eq3.1} holds by direct checking.

\noindent \textbf{Subcase 2.2:} $|E(G[N(v)])| = 7$.

In this subcase, $0\leq |E[V_{K_3}, V_{NK_3}]| \leq 1$ since $|E(G_{K_3})| \ge 6$. 

Now we show there exist two distinct pairs $\{v_i, v_{i^\prime}\}$ of vertices in $N(v)$ such that $v_iv_{i^\prime}\notin E(G)$ and $N(v_i) \cap N(v_{i^\prime}) \cap N(v) = \emptyset$. 
If $|E[V_{K_3}, V_{NK_3}]| = 1$, then $G_{K_3} \cong 2K_3$, and thus the result follows. 
If $|E[V_{K_3}, V_{NK_3}]| = 0$, then $G_{K_3}$ contains six vertices and seven edges, and we can assume
 $G[\{v_1, v_2, v_3\}] \cong K_3$, $G[\{v_4, v_5, v_6\}] \cong K_3$ and $v_3 v_6 \in E(G)$. Therefore,  $v_iv_{i+3}\notin E(G)$, $N(v_i) \cap N(v_{i+3}) \cap N(v) = \emptyset$ for $i \in \{1, 2\}$. 
By (iii) of \cref{claim:8}, $\sum_{p=4}^7 (p-3)|U_p| \ge 6$. Together with \eqref{eq3.8}, we have $2|E(G)| \geq 10n - 46$ and \eqref{eq3.1} holds by direct checking.

\noindent \textbf{Subcase 2.3:} $8 \le |E(G[N(v)])| \le 9$.

In this subcase, since $|E(G_{K_3})| \ge 6$, we have $2|E(G_{K_3})| + |E[V_{K_3}, V_{NK_3}]|= |E(G_{K_3})| + |E(G[N(v)])|\geq 14$. 
By \eqref{eq3.9}, we have $2|E(G)| \geq 10n - 46$ and \eqref{eq3.1} holds by direct checking.

\noindent \textbf{Case 3:} $|V_{K_3}| = 7$.

In this case, $8 \leq |E(G_{K_3})| \leq 9$, and thus $G_{K_3}$ contains at least three copies of $K_3$ and at most two components.

\noindent \textbf{Subcase 3.1:} $|E(G[N(v)])| = 8$.

In this case, $G_{K_3}$ contains two components, and one component is $K_3$. Without loss of generality, we assume $G[\{v_1, v_2, v_3\}] \cong K_3$. Then $v_iv_{i+3}\notin E(G)$, $N(v_i) \cap N(v_{i+3}) \cap N(v) = \emptyset$ for $i \in \{1, 2, 3\}$. By (iii) of \cref{claim:8}, we have $\sum_{p=4}^7 (p-3)|U_p| \ge 9$. 
By \eqref{eq3.8}, we have $2|E(G)| \geq 10n - 41$ and \eqref{eq3.1} holds by direct checking.

\noindent \textbf{Subcase 3.2:} $|E(G[N(v)])| = 9$.

If there exist $v_i, v_j \in N(v)$ such that $v_iv_j\notin E(G)$, $N(v_i) \cap N(v_j) \cap N(v) = \emptyset$, then $\sum_{p=4}^7 (p-3)|U_p| \ge 3$ by (iii) of \cref{claim:8}. 
By \eqref{eq3.8}, we have $2|E(G)| \geq 10n - 45$ and \eqref{eq3.1} holds by direct checking.

If $N(v_i) \cap N(v_j) \cap N(v) \neq \emptyset$ for every $v_iv_j\notin E(G)$ and $v_i, v_j \in N(v)$, then $d_{G[N(v)]}(v_i, v_j)\\= 2$. 
Since $|E(G[N(v)])| = 9$ and $|V_{K_3}| =|N(v)|= 7$, there exists $v_s\in N(v)$ such that $d_{G[N(v)]}(v_s) = 2$. 
Without loss of generality, we take $d_{G[N(v)]}(v_1) = 2$. Let $N^1(v_1)  = \{v_2, v_3\}$ and
$N^2(v_1) =\{v_4, v_5, v_6, v_7\}$.
Since $v_1 \in V_{K_3}$, we have $v_2v_3 \in E(G)$. Since $|E[N^1(v_1), N^2(v_1)]| \ge |N^2(v_1)|=4$, we have $|E(G[N^2(v_1)])| \le 2$. 
For any $v_s \in N^2(v_1)$ satisfying $d_{G[N^2(v_1)]}(v_s) = 0$, we have $v_2v_s, v_3v_s \in E(G)$ since $v_s \in V_{K_3}$. Based on the above arguments, we show there exists a contradiction when $0\leq |E(G[N^2(v_1)])|\leq 2$. 

When $|E(G[N^2(v_1)])| = p$ for $p\in \{0,1\}$, we have $|E[N^1(v_1), N^2(v_1)]| \ge 8- 2p$, and thus $|E(G_{K_3})| \ge 11-p$, a contradiction with $|E(G[N(v)])| = 9$. 

When $|E(G[N^2(v_1)])| = 2$ and $G[N^2(v_1)] \cong P_3 \cup K_1$, we have $|E[N^1(v_1), N^2(v_1)]| \ge 5$, and thus $|E(G_{K_3})| \ge 10$, a contradiction with $|E(G[N(v)])| = 9$. 

When $|E(G[N^2(v_1)])| = 2$ and $G[N^2(v_1)] \cong 2P_2$, we have $|E[N^1(v_1), N^2(v_1)]| = 4$ by $|E(G[N(v)])| = 9$. Without loss of generality, we assume $v_4v_5, v_6v_7 \in E(G)$, $v_2v_4, v_2v_5 \in E(G)$ since $v_4, v_5 \in V_{K_3}$.
Then $v_2v_6, v_2v_7 \in E(G)$ since $N(v_4) \cap N(v_6) \cap N(v) \neq \emptyset$ and $N(v_4) \cap N(v_7) \cap N(v) \neq \emptyset$. By \cref{claim:2}, we have $d(v_2) = n-1$, a contradiction with $\Delta\leq n-2$.

Combining the above arguments, we complete the proof of Theorem~\ref{thm1.2}. \hfill $\square$

\section{Conclusion}

In this paper, we investigate the saturation number of $K_3\vee P_k$ for $k\geq 3$ and $n\geq a_k+3$ and show Theorem~\ref{thm1.2} holds by showing \eqref{eq3.1} holds.
In the proof, we adapt the vertex-partition method of \cite{zhang2025},
introduce the sets $V_{K_3}$, $V_{NK_3}$, $U_{\geq p}$ to obtain a further refinement, 
define the notation $G_{K_3}$, $G_{NK_3}$, $r=|C(Q)\cap U|$ and so on, and obtain the lower bound on $|E(G)|$ given by a simpler expression.

Furthermore, natural problems are to characterize all minimal $(K_3\vee P_k)$- saturated graphs and to investigate whether analogous local reductions can determine $\operatorname{sat}(n,K_s\vee P_k)$ for larger fixed values of $s$\cite{zhang2025}. 

\section*{Funding}
This work is supported by the National Natural Science Foundation of China (Grant Nos. 12371347, 12271337).

\section*{Declarations}
\noindent\textbf{Conflict of interest} The authors declare that they have no known competing financial interests or personal relationships that could have appeared to influence the work reported in this paper.\\
\textbf{Data availability} No data was used for the research described in the article.

\end{document}